\documentclass[11pt]{scrartcl}
\usepackage{amsmath}
\usepackage{amsfonts,amssymb}
\usepackage{graphicx}
\usepackage{enumerate}
\usepackage{amsthm}
\usepackage{relsize}

\newcommand {\mA}{\mathcal A}
\newcommand {\bR}{\mathbb R}
\newcommand {\bN}{\mathbb N}
\newcommand {\bZ}{\mathbb Z}

\newcommand {\bG}{\mathbb G}

\newcommand {\bQ}{\mathbb Q}
\newcommand {\G}{\operatorname{G}}
\newcommand {\hG}{\hat{\operatorname{G}}}
\newcommand {\Id}{\operatorname{Id}}
\newcommand {\h}{\operatorname{h}}
\newcommand {\kH}{\operatorname{H}}
\newcommand {\hh}{\hat{\operatorname{h}}}

\newcommand {\zr}{\operatorname{r}}
\newcommand {\hr}{\hat{\operatorname{r}}}
\newcommand {\zc}{\operatorname{c}}
\newcommand {\hc}{\hat{\operatorname{c}}}
\newcommand {\cF}{{\cal F}}

\newcommand {\F}{\operatorname{F}}
\newcommand {\hF}{\hat{\operatorname{F}}}
\newcommand {\ggT}{\operatorname{gcd}}
\newcommand {\RE}{\operatorname{Re}}
\newcommand {\mF}{\mathcal F}
\newcommand {\mC}{\mathcal C}

\newcommand {\p}{\mathfrak{p}}

\newcommand{\sbt}{\,\begin{picture}(-1,1)(-1,-3)\circle*{3}\end{picture}\ }

\newtheorem{theorem}{Theorem}[section]
\newtheorem {lemma}[theorem]{Lemma}
\newtheorem {proposition}[theorem]{Proposition}
\newtheorem {definition}[theorem]{Definition}
\newtheorem {corollary}[theorem]{Corollary}

\begin{document}

\title{{Positive Limit-Fourier Transform of \\Farey Fractions}}

\author{Johannes Singer\footnote{Department Mathematik, Friedrich-Alexander-Universit\"at Erlangen-N\"urnberg, Cauerstra\ss e 11, D-91058 Erlangen \newline singer@math.fau.de}}

\date{}

\maketitle

\begin{abstract}
We consider the entity of modified Farey fractions via a function $\F$ defined on the direct sum $\widehat{\prod}_{\bN}\left(\bZ/2\bZ\right)$ and we prove that $-\F$ has a non negative Limit-Fourier transform up to one exceptional coefficient.
\end{abstract}

\section{Introduction}

\subsection{The Model and Main Results}

The \emph{modified Farey sequence} is defined inductively. For $k=0$ we start with the two initial fractions $\frac{0}{1}$ and $\frac{1}{1}$. In the $k$th row we copy row number $k-1$. Between two copied fractions we adjoin their mediant. In this way we obtain Table \ref{tab:tab1}. 

We should keep in mind that the usual Farey sequence (see e.\,g. \cite{Hardy60}) does not coincide with the modified Farey sequence. Further the modified Farey sequence also arises in the left branch of the Stern-Brocot tree (\cite{Brocot,Stern}).   

If we disregard the last column on the right side of our table, we can define the Farey function $\F_k$ on $\bG_k:=\left(\bZ/2\bZ \right)^k$ by mapping the elements of $\left(\bZ/2\bZ \right)^k$ with respect to the lexicographic order to the fractions of the $k$th row read from left to right.

$\bG_k$ is a compact Abelian group with character function $\bG_k \rightarrow \mathbb{S}^1$ given by $\sigma \mapsto (-1)^{\sigma \cdot \tau}:=(-1)^{\sum_{i=1}^k{{\sigma_i \tau_i}}}$, $\tau \in \bG_k$. Hence, the dual group $\bG_k^*$ is isomorphic to $\bG_k$. Therefore we have the natural Fourier transform with respect to the Haar measure on $\bG_k$. 

We define the Abelian group $\bG_{\infty}$ as the direct sum of $\bZ/2\bZ$
\begin{align*}
\bG_{\infty}:=\widehat{\prod_{\bN}}{(\bZ /2\bZ)} \leq \prod_{\bN}(\bZ/ 2\bZ), 
\end{align*}
 which is a subgroup of the Cartesian product and consists of all elements of  $\prod_{\bN}(\bZ/ 2\bZ)$ which have only finitely many nonzero entries. Furthermore we define the projection $\p_k \colon \bG_\infty \rightarrow \bG_k$ via $\tau \mapsto (\tau_i)_{i\in \{1,\ldots,k \}}$. 
 
$\bG_\infty$ is locally compact but not compact with respect to the direct sum topology, since the direct sum topology is discrete. The Haar measure on $\bG_\infty$ is given by the counting measure. Furthermore $(\bG_\infty)^*\cong \prod_{\bN}(\bZ/2\bZ)$ because $\psi\colon \prod_{\bN}(\bZ/2\bZ) \rightarrow \operatorname{Hom}(\bG_\infty, \mathbb S^1)$, $\tau \mapsto \psi(\tau)(\sbt\,):=(-1)^{\tau \cdot\, \sbt\,}$ is a group theoretic isomorphism (for details see \cite{Ra99}).  
 
 \begin{table}
\caption{Construction of the modified Farey sequence}
\centering
\renewcommand{\arraystretch}{1.4} 
\begin{tabular}{c|ccccccccccccccccc}
$k=0$ & $\frac{0}{1}$ & & & & & & & & & & & & & & & & $\frac{1}{1}$ \\
$k=1$ & $\frac{0}{1}$ & & & & & & & & $\frac{1}{2}$ & & & & & & & & $\frac{1}{1}$ \\
$k=2$ & $\frac{0}{1}$ & & & & $\frac{1}{3}$  & & & & $\frac{1}{2}$ & & & & $\frac{2}{3}$ & & & & $\frac{1}{1}$ \\
$k=3$ & $\frac{0}{1}$ & & $\frac{1}{4}$ &  & $\frac{1}{3}$  & & $\frac{2}{5}$ & & $\frac{1}{2}$ & & $\frac{3}{5}$ & & $\frac{2}{3}$ & & $\frac{3}{4}$ & & $\frac{1}{1}$ \\
$k=4$ & $\frac{0}{1}$ & $\frac{1}{5}$ & $\frac{1}{4}$ & $\frac{2}{7}$ & $\frac{1}{3}$  & $\frac{3}{8}$ & $\frac{2}{5}$ & $\frac{3}{7}$ & $\frac{1}{2}$ & $\frac{4}{7}$ & $\frac{3}{5}$ & $\frac{5}{8}$ & $\frac{2}{3}$ & $\frac{5}{7}$ & $\frac{3}{4}$ & $\frac{4}{5}$ & $\frac{1}{1}$ 
\end{tabular}
 \label{tab:tab1}
 \end{table}
\renewcommand{\arraystretch}{1}

We regard the natural extension 
\begin{align*}
 \F \colon \widehat{\prod_{\bN}}\left(\bZ/2\bZ\right) \rightarrow [0,1[
\end{align*}
of the Farey functions $\F_k$. Furthermore we have the numerator and denominator function  
\begin{align*}
 \zr,\h \colon \widehat{\prod_{\bN}}\left(\bZ/2\bZ\right) \rightarrow \bN_0
\end{align*}
defined by the above construction via $\F=\frac{\zr}{\h}$.

On $\widehat{\prod}_{\bN}\left(\bZ/2\bZ\right) $ we define the Limit-Fourier transform as a limit of the Fourier transform on $\bG_k$. We prove that the Limit-Fourier transform of $-\F$ exists in the sense
\begin{align}\label{eq:renorm}
 j\colon \bG_{\infty} \rightarrow \bR~, ~~~j(\tau):=-\lim_{k\rightarrow \infty}{2^{-k}\sum_{\sigma\in \bG_k}{\F_k(\sigma)(-1)^{\sigma \cdot \p_k(\tau)}}}.
\end{align}

Although $\bG_\infty$ is a locally compact group we do not use the Fourier transform with respect to the Haar measure since it would be necessary that $F\in L^1(\bG_{\infty})$ which is not true (see proof of Prop. \ref{prop:L1}). Therefore we use the limit construction with scaling factor $2^{-k}$. \\
Furthermore in view of Fourier analysis it would be more natural to define $j$ on $(\bG_\infty)^*\supseteq \bG_\infty$. If we do so, we can conclude from Proposition \ref{prop:abfall} that $\operatorname{supp}(j)\subseteq \bG_{\infty}$. Hence, we incur no loss of generality using the above definition of $j$. 

\bigskip

Our main result is: 
\begin{theorem}\label{theo:ferro}
The negative Farey function $-\F$ has a non negative Limit-Fourier transform up to one exception at $\tau = 0$, i.\,e.
\begin{align*}
j \geq 0 \text{~~~~on~~~} \bG_{\infty}\setminus \{0\}. 
\end{align*}
\end{theorem}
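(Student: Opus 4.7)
My plan is to reduce Theorem \ref{theo:ferro} to the stronger finite-level claim: $\hat{\F}_k(\tau)\leq 0$ for every $k\geq 1$ and every $\tau\in\bG_k\setminus\{0\}$. Since $j(\tau)=-\lim_k\hat{\F}_k(\p_k\tau)$, non-positivity at each finite level immediately gives $j\geq 0$ on $\bG_\infty\setminus\{0\}$. The approach combines an Abel-summation identity with an explicit evaluation of tail character sums.

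Using the Möbius self-similarity $\F_{k+1}(0,\sigma')=\phi_0(\F_k(\sigma'))$, $\F_{k+1}(1,\sigma')=\phi_1(\F_k(\sigma'))$ with $\phi_0(x)=x/(x+1)$ and $\phi_1(x)=1/(2-x)$, each $\F_k(\sigma)$ equals $\Phi_\sigma(0)$ for the product $\Phi_\sigma=\phi_{\sigma_1}\cdots\phi_{\sigma_k}\in\operatorname{SL}_2(\bZ)$. Summation by parts in the lexicographic order on $\bG_k$ together with $\F_k(\sigma_{\min})=0$ yields
\[
\hat{\F}_k(\tau)=2^{-k}\sum_{\sigma\in\bG_k}w_k(\sigma)\,S_\tau(\sigma),
\]
where $w_k(\sigma)=1/(\h_k(\sigma)\h_k(\sigma^+))>0$ is the classical Farey gap (with convention $\h_k(\sigma_{\max}^+):=1$) and $S_\tau(\sigma)=\sum_{\sigma'>\sigma}(-1)^{\sigma'\cdot\tau}$.

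Decomposing $\{\sigma'>\sigma\}$ by the first disagreeing coordinate shows that for $\tau\neq 0$ with $N:=\max\operatorname{supp}(\tau)$,
\[
S_\tau(\sigma)=-(-1)^{E(\sigma)}T(\sigma),
\]
where $E(\sigma)=\sum_{j<N,\,\tau_j=1}\sigma_j\bmod 2$ depends only on the bits of $\sigma$ in $\operatorname{supp}(\tau)\setminus\{N\}$ and $T(\sigma)\geq 0$ is an explicit non-negative quantity depending only on $\sigma_N,\sigma_{N+1},\ldots,\sigma_k$. Partitioning $\bG_k$ into cells $C(\sigma_N,\ldots,\sigma_k)$ on which $T$ is constant, the required inequality becomes
\[
\sum_C T_C\cdot\operatorname{CellSum}(C)\geq 0,\qquad\operatorname{CellSum}(C):=\sum_{\sigma\in C}w_k(\sigma)(-1)^{E(\sigma)}.
\]
I would control this weighted sum by pairing cells with complementary $\sigma_N$-bit and expressing $\operatorname{CellSum}(C)$ as a Walsh-type Fourier coefficient of $w_k$ in its first $N-1$ bits, then exploiting a monotonicity of Farey gaps on nested mediant subintervals.

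\textbf{Main obstacle.} The central difficulty is that individual cells need not have non-negative sum: already at $k=3,\tau=(1,1,0)$, the cell with $\sigma_N=1,\sigma_{N+1}=0$ has $\operatorname{CellSum}=-1/60$, and must be outweighed by the $\sigma_N=0$ cells whose $T_C$-weights are structurally larger. Consequently the argument cannot proceed cell by cell; a global monotonicity principle is required, comparing the denominators $\h_k(\sigma),\h_k(\sigma^+)$ under substitution of the generator $\phi_{\sigma_N}$ by $\phi_{1-\sigma_N}$ in the matrix product representing $\Phi_\sigma$. Establishing this generator-swap monotonicity --- uniformly in the tail bits $\sigma_{N+1},\ldots,\sigma_k$ and in the free bits $\sigma_1,\ldots,\sigma_{N-1}$ --- is where the bulk of the technical content of the proof is expected to reside.
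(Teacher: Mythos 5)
Your reduction to the finite-level statement $(\mF_k\F_k)(\tau)\le 0$ for all $k$ and all $\tau\in\bG_k\setminus\{0\}$ is sound (it is exactly what the paper proves, since $j(\tau)=\lim_k j_k(\p_k(\tau))$ and $j_k=-\mF_k\F_k$), and the Abel-summation identity expressing $(\mF_k\F_k)(\tau)$ as a sum of the Farey gaps $1/(\h_k(\sigma)\h_k(\sigma^+))$ against tail character sums is a legitimate starting point. But the proposal is not a proof: the entire positivity burden is deferred to the ``global monotonicity principle'' / ``generator-swap monotonicity'' of your last paragraph, which you explicitly leave unestablished. Your own observation that a single cell already contributes $-1/60$ at $k=3$, $\tau=(1,1,0)$ shows the termwise argument fails, and nothing in the proposal demonstrates that the $\sigma_N=0$ cells actually outweigh the $\sigma_N=1$ cells, let alone uniformly in $k$, in the tail bits $\sigma_{N+1},\ldots,\sigma_k$, and in the free bits $\sigma_1,\ldots,\sigma_{N-1}$. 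The quantities $\h_k(\sigma)\h_k(\sigma^+)$ do not obey any simple pointwise comparison under flipping a single spin (the denominators can move in either direction), so the missing lemma is not a routine verification; it is the theorem.

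For contrast, the paper's proof avoids estimating character sums altogether. Writing $\F_k=\tfrac12-\tfrac12 W_k$ with $W_k=\zr_{k+1}(1,-1)(0,\cdot)/\zr_{k+1}(1,1)(0,\cdot)$ gives $j_k(\tau)=-\tfrac12\delta_{\tau,0}+\tfrac12(\mF_k W_k)(\tau)$, so it suffices to show that $W_k$ is \emph{strictly} ferromagnetic (Lemma \ref{lem:ferrosym}). This is done by induction on $k$: by the recursion for $\zr$ (Lemma \ref{lem:rek}), the two halves of $W_{k+1}$ are $g_1(W_k)$ and $g_2(W_k)$ for the M\"obius maps $g_1(x)=(x+1)/(3-x)$ and $g_2(x)=(x-1)/(x+3)$; the combinations $g_1\pm g_2$ are shown to have non-negative Taylor coefficients on $]-3,3[$; and the permanence principle (Proposition \ref{pro:perma}) then propagates strict ferromagnetism through the composition. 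The positivity is thus obtained structurally, from membership in the multiplicative cone $\mC_k$, rather than from a direct inequality on Farey gaps. If you wish to pursue your route you must actually prove the compensation inequality between paired cells; as it stands the argument has its central step missing.
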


\bigskip

Furthermore we have a number theoretic significance of the modified Farey fractions. Using the modified Farey sequence we have the following interpolation result: 
 
\begin{theorem}\label{theo:inter}
 Let $t\in [0,1]$. For $\RE(s)>2$ 
 \begin{align}\label{eq:inter}
 Z(s,t):= \sum_{\sigma \in \bG_{\infty}}{\exp\left(2\pi i t(1-\F(\sigma) \right)(h(\sigma))^{-s}}
\end{align}
is summable. Especially we have
\begin{align}\label{eq:key}
 Z(s,1)=\sum_{\sigma \in \bG_{\infty}}{\exp\left(-2\pi i\F(\sigma) \right)(h(\sigma))^{-s}} =  \frac{1}{\zeta(s)}
\end{align}
and 
\begin{align}\label{eq:zustand}
 Z(s,0)=\sum_{\sigma \in \bG_{\infty}}{(h(\sigma))^{-s}} =  \frac{\zeta(s-1)}{\zeta(s)},
\end{align}
in which $\zeta$ denotes the Riemann zeta function.
\end{theorem}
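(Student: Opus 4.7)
The plan is to reduce the sum defining $Z(s,t)$ to classical Dirichlet series by identifying $\F$ as an enumeration of the reduced proper fractions in $[0,1[$. The crucial first step is to establish a bijection
\begin{align*}
\F \colon \bG_\infty \longrightarrow \{\, p/q : q \ge 1,\ 0 \le p < q,\ \gcd(p,q) = 1 \,\},
\end{align*}
with $\zr(\sigma)$ and $\h(\sigma)$ equal to numerator and denominator in lowest terms. This is the classical content of the Stern--Brocot construction: an induction on $k$ shows that any two consecutive fractions $p/q < p'/q'$ in row $k$ satisfy $p'q - pq' = 1$, so their mediant $(p+p')/(q+q')$ is automatically reduced and lies strictly between them; conversely, any reduced $p/q \in [0,1[$ is produced by a unique $\sigma$, recoverable by running Euclid's algorithm on $(p,q)$ (the inverse mediant recursion).

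Granting this bijection, absolute summability for $\RE(s) > 2$ follows from
\begin{align*}
\sum_{\sigma \in \bG_\infty} \h(\sigma)^{-\RE(s)} \;=\; \sum_{q \ge 1} q^{-\RE(s)} \, \#\{\, 0 \le p < q : \gcd(p,q) = 1 \,\} \;\le\; \sum_{q \ge 1} q^{1-\RE(s)} \;<\; \infty,
\end{align*}
since $|e^{2\pi i t (1-\F(\sigma))}| = 1$. Absolute convergence then licenses the rearrangement
\begin{align*}
Z(s,t) \;=\; \sum_{q \ge 1} q^{-s} \sum_{\substack{0 \le p < q\\ \gcd(p,q)=1}} e^{2\pi i t(1 - p/q)}.
\end{align*}

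For $t=1$ the phase becomes $e^{-2\pi i p/q}$ and the inner sum is the Ramanujan sum $c_q(1)$ -- taking care that for $q>1$ the value $p=0$ is excluded by coprimality, while for $q=1$ the sole term contributes $1$ -- which equals $\mu(q)$ by the classical evaluation of $c_q(1)$. Thus $Z(s,1) = \sum_{q \ge 1} \mu(q) q^{-s} = 1/\zeta(s)$. For $t=0$ the exponential is $1$ and the inner sum counts residues coprime to $q$, giving $\varphi(q)$ for every $q \ge 1$, whence $Z(s,0) = \sum_{q \ge 1} \varphi(q) q^{-s} = \zeta(s-1)/\zeta(s)$ via the Dirichlet convolution $\varphi = \Id \ast \mu$.

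The main obstacle is the first step, the bijection between $\bG_\infty$ and the reduced fractions. Injectivity, surjectivity, and the preservation of reducedness under the mediant operation all rest on the determinant identity $p'q - pq' = 1$ for consecutive entries of a row, which must be propagated carefully through the recursion and matched to the lexicographic indexing. Once that combinatorial picture is in hand, the analytic computation collapses to two standard Dirichlet series identities.
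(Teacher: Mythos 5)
Your proof is correct and rests on the same ingredients as the paper's: the bijectivity of $\F\colon\bG_\infty\to\bQ\cap[0,1[$ (which the paper records as a proposition in Section 3 via the counting functions $\varphi_k$), the Ramanujan-sum evaluation $\mu(n)=\sum_{1\le k\le n,\,\gcd(k,n)=1}e^{2\pi i k/n}$, and the classical Dirichlet series $\sum\mu(n)n^{-s}=1/\zeta(s)$ and $\sum\varphi(n)n^{-s}=\zeta(s-1)/\zeta(s)$. The only difference is organizational: you reorder the absolutely convergent sum by denominator directly, while the paper compares the finite partial sums over $\bG_k$ with $\sum\mu(n)n^{-s}$ using $\varphi_k\le\varphi$ and $\varphi_k(p)=\varphi(p)$ for $p\le k+1$ and then lets $k\to\infty$ --- the same argument in a different order.
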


The result of this paper should be interpret as purely mathematically. However it makes sense to compare it with similar models that have a physical interpretation. We would like to emphasize that the modified Farey sequence has no known significance as model in statistical physics. 

\subsection{Physical Background and Related Models}

In this subsection we briefly review the concept of classical spin chains (\cite{Rue99}) and discuss the \emph{Number-Theoretical Spin Chain} (NTSC) of Knauf \cite{Contucci97,Contucci99,Guerra,Knauf93,Knauf98,Knauf13} and the \emph{Farey Fraction Spin Chain} (FFSC) introduced by Kleban and \"Ozl\"uk \cite{Bandtlow10,Boca07,Fiala03,Fiala04,Fiala05,Kallies01,Kesseb12,Kleban99,Peter01,Prellberg06}. 

Let $\Omega\neq \emptyset$ be a finite set. Each elementary event $\omega \in \Omega$ is assigned with an energy value $H(\omega)$. This defines an energy function $\kH\colon \Omega \to \bR$. For the inverse temperature $\beta$ the probability measure given by the density
\begin{align*}
 p_\beta \colon \Omega \to [0,1], ~~~p_\beta(\omega):=\frac{\exp(-\beta \kH(\omega))}{Z(\beta)}
\end{align*}
with \emph{partition function} $Z(\beta):=\sum_{\omega\in \Omega}\exp(-\beta H(\omega))$ is called \emph{Gibbs-measure}. For a \emph{finite spin chain} the configuration space is given by $\Omega:=E^k$ with $E:=\{\uparrow, \downarrow \} \cong \bZ/2\bZ$ and $k\in \bN$. Using the Fourier transform 
\begin{align*}
 (\cF_k f)(\tau):=2^{-k}\sum_{\sigma \in \bG_k} (-1)^{\sigma \cdot \tau}f(\sigma)~~~(\tau \in \bG_k)
\end{align*}
the energy function has the form 
\begin{align*}
 \kH_k(\sigma) = - \sum_{\tau \in \bG_k}(-1)^{\sigma \cdot \tau}j_k(\tau)
\end{align*}
with the so called \emph{interaction coefficients}
\begin{align*}
 j_k(\tau):=-(\cF_k \kH_k)(\tau) ~~~~(\tau \in \bG_k). 
\end{align*}
If $j_k \geq 0$ on $\bG_k\setminus\{0\}$ we call the spin chain weakly ferromagnetic. In this context we have to observe that for finite spin chains the interaction coefficient at $\tau = 0$ has no influence on the Gibbs measure. Therefore the weak ferromagnetism is no restriction to the general ferromagnetic case. 

For an \emph{half-infinite spin chain} we take the limit case $k\to \infty$ and obtain the configuration space $\Omega:=\bG_\infty$. The construction of the Limit-Fourier transform \eqref{eq:renorm} coincides with the thermodynamic limit in the sense of statistical mechanics. From this point of view \eqref {eq:renorm} is a natural choice. 

The class of ferromagnetic spin chains is of great importance in the context of statistical physics. First of all we have the machinery of correlation inequalities like GKS-inequalities that substantially rely on the ferromagnetic property (see \cite{Ginibre}) and are also of mathematical interest. Furthermore there is the Lee-Yang theory concerning the Ising model which predicts zeros of the partition function of a ferromagnetic spin chain (see \cite{Rue99}). \\
In a more general situation Newman studied the interplay of ferromagnetic spin chains, Lee-Yang theorem and number theory (see e.\,g. \cite{Newman74,Newman75a,Newman75b}). This emphasizes the relevance of ferromagnetism in the mathematical context.

\paragraph{Number-Theoretical Spin Chain}

The NTSC was defined by Knauf in \cite{Knauf93} via the energy function $\kH:=\log(\h)\colon \bG_\infty \to \bR$. The energy values are exactly the logarithms of the denominators of the modified Farey sequence. The partition function of this model is given by 
\begin{align*}
 Z_{\text{NTSC}}(s)=\sum_{\sigma \in \bG_{\infty}}{(h(\sigma))^{-s}} = \frac{\zeta(s-1)}{\zeta(s)}. 
\end{align*}
There are two rigorous proofs (\cite{Guerra,Knauf93}) that show that the NTSC is weakly ferromagnetic. Furthermore the partition function of Knauf's model is the starting point of the interpolation in Theorem \ref{theo:inter}. 

\paragraph{Farey Fraction Spin Chain}

In \cite{Kleban99} Kleban and \"Ozluk introduced the FFSC. The energy function is defined inductively. For $k\geq 0$ we regard $M_k\colon \bG_k \to \operatorname{SL}(2,\bZ)$ given by $M_0:=\left(\begin{smallmatrix} 1 & 0 \\ 0 & 1 \end{smallmatrix}\right)$ and for $k\geq 1$ by
\begin{align*}
 M_k(\sigma):=A^{1-\sigma_k}B^{\sigma_k}M_{k-1}(\sigma_1,\dots, \sigma_{k-1}) ~~~~(\sigma\in \bG_k)
\end{align*}
with $A:=\left(\begin{smallmatrix} 1 & 0 \\ 1 & 1 \end{smallmatrix}\right)$ and $B:=A^t= \left(\begin{smallmatrix} 1 & 1 \\ 0 & 1 \end{smallmatrix}\right)$. The energy function of the FFSC is then given by 
\begin{align*}
 \kH_k\colon \bG_k \to \bR, ~~~\kH_k(\sigma):= \log(\operatorname{Trace}(M_k)).
\end{align*}
For instance, in the case $k=2$ we get 
\begin{align*}
 M_2(0,0)=\left(\begin{smallmatrix} 1 & 0 \\ 2 & 1 \end{smallmatrix}\right),  M_2(0,1)=\left(\begin{smallmatrix} 1 & 1 \\ 1 & 2 \end{smallmatrix}\right), M_2(1,0)=\left(\begin{smallmatrix} 2 & 1 \\ 1 & 1 \end{smallmatrix}\right), 
 M_2(1,1)=\left(\begin{smallmatrix} 1 & 2 \\ 0 & 1 \end{smallmatrix}\right).
\end{align*}
We see that the Stern-Brocot tree is obtained by regarding the columns of the matrices. But in the FFSC the energy function the logarithm of the trace of theses matrices. In particular the energy  function is an extensive quantity. Therefore the model is different form our model studied in this paper. Numerical experiments (\cite{Kleban99}) indicate that the FFSC is weakly ferromagnetic. However a rigorous proof is still open (\cite{Contucci99}). 

\bigskip

Now we can interpret Theorem \ref{theo:ferro} in a similar thermodynamic spirit. The Farey function $\F$ could be considered as an energy function of an infinite spin chain even though it is not an extensive quantity. Then the Limit-Fourier transform of the negative energy function is the interaction of the spin chain and Theorem \ref{theo:ferro} tells us that our system is weakly ferromagnetic. In Theorem \ref{theo:inter} the non-extensive Farey function is inserted in the partition function of the NTSC as a phase factor which is controlled by the parameter $t$. Equation \eqref{eq:key} shows that the pole at $s=2$ vanishes in the case $t=1$.

\bigskip

This paper is organized as follows. In Section 2 we provide two equivalent definitions of the Farey function $\F_k$ which essentially rely on a set theoretic bijection between the groups $(\bZ/2\bZ)^k$ and $\bZ/2^k\bZ$.
Section 3 is devoted to properties of the Farey function which will be needed in Section 4 to show that the Limit-Fourier transform on $\bG_\infty$ is well defined and to get estimates of the interaction coefficients. Section 4 culminates in the proof of Theorem \ref{theo:ferro}. We finish the section with the proof of Theorem \ref{theo:inter}.

\section{General Framework}

Now we provide two group theoretic descriptions of the modified Farey sequence that rely  substantially on the groups
\begin{align*}
  \bG_k := (\bZ / 2 \bZ )^k ~~~ \text{and} ~~~ \G_k := \bZ /( 2^k\bZ ) 
\end{align*}
for $k\in \bN_0$. We use the complete residue systems $\{0, \ldots, 2^k-1 \}$ for $\G_k$  and $\{0,1\}$ for $\bZ / 2\bZ$. 
We represent $s\in \G_k$ uniquely in the form $s= \sum_{i=1}^k{\sigma_i 2^{k-i}}, \sigma_i\in \bZ /2 \bZ$. Therefore we get the bijection
\begin{align*} 
\Id_k \colon \G_k\rightarrow \bG_k, ~~~s\mapsto (\sigma_1, \ldots, \sigma_k). 
\end{align*}

Furthermore we define the set $\hG_k := \{0,\ldots, 2^k\}$ and the projection
\begin{align*}
\Pi_k \colon \G_k \rightarrow \hG_k,
\end{align*}
which maps every element of the cyclic group $\G_k$ to the unique representative in $\hG_k\setminus \{2^k\}$. \footnote{It would be also possible to use the group $\G_{k+1}$ instead of the set $\hG_k$. In order to avoid confusion with different indices we decided to use the set $\hG_k$.}

\cite{Knauf93} introduced for $k\in \bN$ and $s_0,s_1\in \bR$ the family  $\zr_k(s_0,s_1)\colon\bG_k\rightarrow \bR$ of functions by setting 
\begin{align*}
\zr_1(s_0,s_1)(0):=s_0,~~~~ \zr_1(s_0,s_1)(1):=s_1 
\end{align*}
and for $\sigma:=(\sigma_1, \ldots,\sigma_k) \in \bG_k$
\begin{align*}
\zr_{k+1}(s_0,s_1)(\sigma, \sigma_{k+1}):= \zr_k(s_0,s_1)(\sigma)+\sigma_{k+1}\zr_k(s_0,s_1)(1-\sigma), 
\end{align*}
in which $1-\sigma:=(1-\sigma_1, \ldots, 1-\sigma_k)$.

Now we define the function $\zc_k$ on $\bG_k$. We will see that through the right choice of the parameters $s_0$ and $s_1$ we get the numerator and denominator function.  

For $k\in \bN_0$ and $s_0,s_1\in \bR$ we define the function $\zc_k:\bG_k \rightarrow \bR$ by
\begin{align*}
\zc_k(\sigma):= \zr_{k+1}(s_0,s_1)(0,\sigma),~~~~ (\sigma \in \bG_k)
\end{align*}
and $\hc_{k}\colon\hG_k\rightarrow \bR $ inductively by $\hc_0(0):=s_0$, $\hc_0(1):=s_1$ and for $s \in \hG_k$ by 
\begin{align*}
 \hc_{k+1}(2s):=\hc_k(s)
\end{align*}
and for $s\in \hG_k\setminus \{2^k \}$ by 
\begin{align*}
\hc_{k+1}(2s+1):=\hc_k(s)+\hc_k(s+1)
\end{align*}
$\zc_k$ and $\hc_k$ are depending on the parameter $s_0$ and $s_1$. For reasons of clarity we omit the parameters in the notation.
The function $\hc_k$ reproduces the inductive construction of the modified Farey sequence in the introduction. The following lemma shows the equivalence of this definition with that of $\zc_k$ on $\bG_k$.  

\begin{lemma}\label{lem:rid}
For $k\in \bN_0$ we have
\begin{align*}
\hc_k\circ \Pi_k \equiv \zc_k\circ \Id_k. 
\end{align*}
\end{lemma}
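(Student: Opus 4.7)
The natural approach is induction on $k$. The subtlety is this: if one unfolds $\zc_{k+1}(\sigma) = \zr_{k+2}(0, \sigma)$ using the defining recursion for $\zr$ on the last entry, one obtains not only a term $\zr_{k+1}(0, \sigma_1, \ldots, \sigma_k) = \zc_k(\sigma_1, \ldots, \sigma_k)$ (which the induction hypothesis handles directly) but also a term $\zr_{k+1}(1, 1-\sigma_1, \ldots, 1-\sigma_k)$ that is not of the form $\zc_k(\cdot)$ and therefore escapes the hypothesis.

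My plan is to strengthen the statement so that the second kind of term is simultaneously handled. Concretely, for $k \in \bN_0$ and $\sigma = (\sigma_1, \ldots, \sigma_k) \in \bG_k$ with $s := \sum_{i=1}^k \sigma_i 2^{k-i}$, I would prove the conjunction
\begin{align*}
\text{(a)}\quad \zr_{k+1}(0, \sigma) &= \hc_k(s), \\
\text{(b)}\quad \zr_{k+1}(1, 1 - \sigma) &= \hc_k(s + 1),
\end{align*}
by induction on $k$. Statement (a) is the lemma itself, since $\Pi_k \circ \Id_k^{-1}(\sigma) = s$, while (b) evaluates $\hc_k$ at a point in $\{1, \ldots, 2^k\} \subseteq \hG_k$; in the extreme case $\sigma = (1, \ldots, 1)$ this forces an evaluation at the boundary $2^k$, where a short sub-induction using the even rule gives $\hc_k(2^k) = s_1$, so the expression is well-defined.

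The base case $k = 0$ is immediate from $\zr_1(0) = s_0 = \hc_0(0)$ and $\zr_1(1) = s_1 = \hc_0(1)$. For the induction step, set $s' := \sum_{i=1}^k \sigma_i 2^{k-i}$ and $s := 2 s' + \sigma_{k+1}$, apply the $\zr$-recursion on the last entry to both $\zr_{k+2}(0, \sigma_1, \ldots, \sigma_{k+1})$ and $\zr_{k+2}(1, 1-\sigma_1, \ldots, 1-\sigma_{k+1})$, and substitute the level-$k$ hypotheses (a) and (b). The four resulting sub-cases, indexed by $\sigma_{k+1} \in \{0, 1\}$ and by which of (a), (b) is being shown, match term-by-term the two defining rules $\hc_{k+1}(2 t) = \hc_k(t)$ and $\hc_{k+1}(2 t + 1) = \hc_k(t) + \hc_k(t+1)$; in particular the ``new'' term $\hc_k(s'+1)$ produced by the odd rule is exactly what (b) at level $k$ supplies.

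The main obstacle is conceptual rather than computational, namely recognising that (a) and (b) must be proved in tandem because the $\zr$-recursion couples the \emph{left-half} strings starting with $0$ to the bit-flipped \emph{right-half} strings starting with $1$. Once this coupled induction is set up, each of the four sub-cases is verified by direct substitution, and extracting (a) yields the lemma.
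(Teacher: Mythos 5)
Your proof is correct, but it takes a genuinely different route from the paper's. The paper proves the lemma by a direct induction on the single statement $\hc_k\circ\Pi_k\equiv\zc_k\circ\Id_k$, which forces it to confront the term $\hc_k(\hat a+1)$ in the odd case: it must explicitly describe the binary increment $\Id_k(a+1)$ in terms of $\Id_k(a)$ (locating the last zero bit $l=\max M$ and propagating the carry), prove an auxiliary identity equating $\zr_{k+1}(s_0,s_1)(0,\sigma_1,\ldots,\sigma_{l-1},1,0_{k-l})$ with $\zr_{k+1}(s_0,s_1)(1,1-\sigma_1,\ldots,1-\sigma_{l-1},1,0_{k-l})$, and treat the all-ones case $a=2^k-1$ separately. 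Your strengthened induction hypothesis --- proving $\zr_{k+1}(0,\sigma)=\hc_k(s)$ and $\zr_{k+1}(1,1-\sigma)=\hc_k(s+1)$ in tandem --- builds that bridge directly into the statement being proved, so the carry-propagation combinatorics and the auxiliary identity disappear entirely; the induction step reduces to the four clean sub-cases you describe, each a one-line substitution, with the boundary value $\hc_k(2^k)$ handled uniformly (it is defined by the even rule and equals $s_1$). What the paper's version buys is that it proves exactly the stated lemma with no strengthening; what yours buys is a shorter, more transparent induction that exposes the real structure, namely that the $\zr$-recursion couples the prefix-$0$ string at position $s$ to the bit-flipped prefix-$1$ string at position $s+1$. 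Both proofs are complete and correct.
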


\begin{proof} 
For $k=0$ we have $\hc_0(0)=s_0=\zc_0$. Let $s\in \G_{k+1}$. For $s=2a$, $a\in \G_k$,
 setting $\sigma :=\Id_k(a) \in \bG_k$ and $\hat{a}:=\Pi_k(a)$. We get $\Id_{k+1}(s)=(\sigma, 0)$ and $\Pi_{k+1}(s)=2\hat{a}$. Therefore we have
\begin{align*}
\hc_{k+1}(\Pi_{k+1}(s)) & = \hc_k(\hat{a})=\hc_k(\Pi_k(a)) \\
						& = \zc_k(\Id_{k}(a)) = \zr_{k+2}(s_0,s_1)(0,\sigma, 0) \\
            & = \zc_{k+1}(\sigma,0)=\zc_{k+1}(\Id_{k+1}(s)). 
\end{align*}
For $s=2a+1$, $a\in \G_k$, setting $\hat{a}:=\Pi_k(a)$. We get $\Pi_{k+1}(s)=2 \hat{a}+1$. For $a$ with $M:=\{i\in \{1,\ldots,k\}: \left(\Id_k(a)\right)_i=0  \}\neq \emptyset$, we define $l:=\max M$. Using the notation $1_{m}:=(1,\ldots, 1)\in \bG_m$ and $0_{m}:=(0,\ldots, 0)\in \bG_m$, we get $\Id_k(a)=(\sigma_1, \ldots, \sigma_{l-1},0,1_{k-l})$ and $\Id_k(a+1)=(\sigma_1,\ldots, \sigma_{l-1}, 1,0_{k-l})$ as well as $\Id_{k+1}(s)=(\sigma_1,\ldots, \sigma_{l-1},0,1_{k-l+1})$. 
Therefore we get
\begin{align*}
\hc_{k+1}(\Pi_{k+1}(s))  = & \, \hc_k(\hat{a}) + \hc_k(\hat{a}+1) = \, \hc_k(\Pi_k(a)) +\hc_k(\Pi_k(a+1))  \\
             = &  \zc_{k}(\Id_k(a))+  \zc_{k}(\Id_k(a+1))\\
             = & \zr_{k+1}(s_0,s_1)(0,\sigma_1, \ldots, \sigma_{l-1},0,1_{k-l}) \\
	     &  +  \zr_{k+1}(s_0,s_1)(0,\sigma_1,\ldots, \sigma_{l-1}, 1,0_{k-l})\\
             = & \zr_{k+1}(s_0,s_1)(0,\sigma_1, \ldots, \sigma_{l-1},0,1_{k-l}) \\
	      & +  \zr_{k+1}(s_0,s_1)(1,1-\sigma_1,\ldots,1- \sigma_{l-1}, 1,0_{k-l})\\
             = &  \zr_{k+2}(s_0,s_1)(0,\sigma_1,\ldots, \sigma_{l-1}, 0, 1_{k-l+1}) \\
             = & \zc_{k+1}(\Id_{k+1}(s)), 
\end{align*}
in which we used
\begin{align*}
  &  \zr_{k+1}(s_0,s_1)(0,\sigma_1,\ldots,\sigma_{l-1}, 1,0_{k-l}) \\
= &  \zr_{l}(s_0,s_1)(0,\sigma_1,\ldots,\sigma_{l-1}) + \zr_{l}(s_0,s_1)(1,1-\sigma_1,\ldots,1- \sigma_{l-1})\\
= & \zr_{k+1}(s_0,s_1)(1,1-\sigma_1,\ldots,1- \sigma_{l-1}, 1, 0_{k-l}).
\end{align*}
For $a=2^k-1$ (this is exactly the case when $M=\emptyset$) we have $\Pi_{k+1}(s)=2^{k+1}-1$, $\Id_{k}(a)=1_{k}$ and $\Id_{k+1}(s)=1_{k+1}$. We get
\begin{align*}
 \hc_{k+1}(\Pi_{k+1}(s))  &  =  \hc_k(2^{k}-1) + \hc_k(2^k) \\
			  & = \hc_k(\Pi_k(a)) + \hc_0(1) =  \zc_k(\Id_k(a)) + s_1 \\
			  & = \zr_{k+1}(s_0,s_1)(0,1_k) + \zr_{k+1}(s_0,s_1)(1,0_k)\\
			  & = \zr_{k+2}(s_0,s_1)(0,1_{k+1}) \\
			  & = \zc_{k+1}(\Id_{k+1}(s)). 
\end{align*}
\end{proof}

Now we use the family $\zr_k(s_0,s_1)$ to get the numerator and denominator function. 
For $k\in \bN_0$ the denominator function $\h_k\colon\bG_k \rightarrow \bN$ is defined by  
\begin{align*}
\h_k(\sigma):= \zr_{k+1}(1,1)(0,\sigma), ~~~(\sigma \in \bG_k); 
\end{align*} 
and the numerator function $\zr_k\colon\bG_k \rightarrow \bN_0$ is defined by 
\begin{align*}
\zr_k(\sigma):= \zr_{k+1}(0,1)(0,\sigma), ~~~(\sigma \in \bG_k). 
\end{align*} 

Furthermore we extend the functions $\h_k\circ \Id_k$ and $\zr_k\circ \Id_k$ on $\hG_k$ and get for $k\in \bN_0$ the function $\hh_k\colon\hG_k\rightarrow \bN$ defined by $\hh_0(0):=1$, $\hh_0(1):=1$ and for $s\in\hG_k$ by
\begin{align*}
\hh_{k+1}(2s):=\hh_k(s)
\end{align*}
as well as for $s\in \hG_k \setminus\{2^k\}$ by
\begin{align*}
\hh_{k+1}(2s+1):=\hh_k(s)+\hh_k(s+1).
\end{align*}
$\hr_{k}\colon\hG_k\rightarrow \bN_0 $ is defined by $\hr_0(0):=0$, $\hr_0(1):=1$ and for $s\in \hG_k$ by
\begin{align*}
\hr_{k+1}(2s):=\hr_k(s)
\end{align*}
as well as for $s\in \hG_k\setminus \{2^k \}$ by
\begin{align*}
\hr_{k+1}(2s+1):=\hr_k(s)+\hr_k(s+1).
\end{align*}

Immediately, we can deduce from Lemma \ref{lem:rid}: 
\begin{corollary}\label{coro:hk}\label{coro:rk}
 For $k\in \bN_0$ we have
\begin{align*}
  \hh_k\circ \Pi_k \equiv \h_k\circ \Id_k ~~~\text{and}~~~
  \hr_k\circ \Pi_k \equiv \zr_k\circ \Id_k. 
\end{align*}
\end{corollary}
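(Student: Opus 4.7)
The plan is to reduce the corollary directly to Lemma~\ref{lem:rid} by recognizing that the pairs $(\hh_k, \h_k)$ and $(\hr_k, \zr_k)$ are nothing but the pairs $(\hc_k, \zc_k)$ instantiated at two specific choices of the parameters $(s_0, s_1)$. There should be no genuine obstacle: the work has already been done in Lemma~\ref{lem:rid}, and what remains is only to match up the definitions and then specialize.

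First I would verify the parameter matching for the denominator functions. Comparing the inductive definition of $\hc_k$ (with $s_0 = s_1 = 1$) to that of $\hh_k$, one sees: $\hc_0(0) = 1 = \hh_0(0)$, $\hc_0(1) = 1 = \hh_0(1)$, and both satisfy the recursions $f(2s) = f_{k}(s)$ and $f(2s+1) = f_k(s) + f_k(s+1)$ on the appropriate index sets. So by induction on $k$, $\hc_k = \hh_k$ when $(s_0, s_1) = (1,1)$. On the other side, by the very definitions $\h_k(\sigma) = \zr_{k+1}(1,1)(0, \sigma) = \zc_k(\sigma)$ for the same parameter choice. Plugging $(s_0, s_1) = (1,1)$ into Lemma~\ref{lem:rid} gives
\begin{align*}
\hh_k \circ \Pi_k = \hc_k \circ \Pi_k \equiv \zc_k \circ \Id_k = \h_k \circ \Id_k.
\end{align*}

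Then I would repeat the identical argument for the numerator functions with $(s_0, s_1) = (0, 1)$: the initial values $\hc_0(0) = 0 = \hr_0(0)$, $\hc_0(1) = 1 = \hr_0(1)$ agree, the recursions for $\hc_k$ and $\hr_k$ are identical in form, and $\zr_k(\sigma) = \zr_{k+1}(0,1)(0,\sigma) = \zc_k(\sigma)$ for this parameter choice. A second application of Lemma~\ref{lem:rid} then yields $\hr_k \circ \Pi_k \equiv \zr_k \circ \Id_k$.

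In short, the corollary is a straight specialization: all one has to check is that both distinguished pairs of functions satisfy the same recursive scheme, hence coincide with $(\hc_k, \zc_k)$ for suitable initial data, and then invoke the already proven lemma. No new combinatorial ingredient is required.
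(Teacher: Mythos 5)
Your proposal is correct and is exactly the argument the paper intends: the corollary is stated as an immediate consequence of Lemma~\ref{lem:rid}, obtained by specializing the parameters to $(s_0,s_1)=(1,1)$ for the denominator functions and $(s_0,s_1)=(0,1)$ for the numerator functions. Your explicit verification that the initial values and recursions match is the routine bookkeeping the paper leaves to the reader.
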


\section{Farey Function}

In this section we give a formal definition of the Farey function.  
\begin{definition}\label{deffarey}
For $k\in \bN_0$ the Farey function $\F_k\colon \bG_k \rightarrow \bQ \cap [0,1[\,$ is defined by
\begin{align*}
\F_k(\sigma):=\frac{\zr_k(\sigma)}{\h_k(\sigma)}, ~~~(\sigma \in \bG_k);
\end{align*}
and the extended Farey function $\hF_k\colon \hG_k \rightarrow \bQ \cap [0,1]$ is defined by 
\begin{align*}
\hF_k(s):=\frac{\hr_k(s)}{\hh_k(s)}, ~~~(s \in \hG_k). 
\end{align*}
\end{definition}

Due to Corollary \ref{coro:hk} we have $\hF_k\circ \Pi_k \equiv \F_k\circ \Id_k$ for $k\in \bN_0$. Therefore the heuristic definition in the introduction and the group theoretic approach coincide.

Hereafter we summarize some well known properties of the Farey function $\F_k$. For $k\in \bN_0$ we have
\begin{align}\label{mono}
0 = \hF_k(0) < \hF_k(1) < \ldots < \hF_k(2^k-1) < \hF_k(2^k)=1. 
\end{align}
Therefore with respect to the lexicographic order on $\bG_k$ the map $\sigma \mapsto \F_k(\sigma)$ is strictly increasing.
Furthermore two successive extended Farey fractions satisfy the unimodular relation, i.\,e.
for $k\in \bN_0$ and $s\in \hG_k\setminus \{2^k\}$ we have 
\begin{align}\label{ggT1}
\hh_k(s) \cdot \hr_k(s+1)- \hh_k(s+1) \cdot \hr_k(s)=1. 
\end{align}
As a consequence $\ggT\left(\hr_k(s),\hh_k(s)\right)=1$ for $k\in \bN_0$ and $s\in \hG_k$.
For $k\in \bN_0$ we define the arithmetical function $\varphi_k:\bN\rightarrow \bN_0$ by 
\begin{align*}
\varphi_k(n)  := \# \{ s\in \hG_k \setminus \{ 2^k \}: \hh_k(s)=n \} 
	       = \# \{ \sigma\in \bG_k : \h_k(\sigma)=n \}. 
\end{align*}

As shown in \cite{Knauf93}, Prop. 2.2] the function $\varphi_k$ is related to Euler's totient function $\varphi$ since for $k\in \bN_0$ we have 
$\varphi_k\leq \varphi_{k+1}\leq \varphi$ and $\varphi_k(p)=\varphi(p)$ for $p\in \{1,\ldots, k+1 \}$. All in all we obtain the following result: 

\begin{proposition}
The map $ \F \colon \bG_{\infty}\rightarrow \bQ \cap [0,1[$  is bijective.  
\end{proposition}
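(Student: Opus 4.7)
The plan is to prove well-definedness on $\bG_\infty$, then injectivity from monotonicity, and finally surjectivity via a counting argument using $\varphi_k$.

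First I would check that $\F$ is well-defined on $\bG_\infty$. Each $\sigma \in \bG_\infty$ has only finitely many nonzero coordinates, so it lies in the image of the ascending embedding $\bG_k \hookrightarrow \bG_\infty$, $\sigma \mapsto (\sigma_1,\dots,\sigma_k,0,0,\dots)$, for all sufficiently large $k$. Compatibility of the finite-level Farey functions under this embedding amounts to $\F_{k+1}(\sigma,0) = \F_k(\sigma)$, which follows by setting $\sigma_{k+1} = 0$ in the defining recursion $\zr_{k+1}(s_0,s_1)(\sigma,\sigma_{k+1}) = \zr_k(s_0,s_1)(\sigma) + \sigma_{k+1}\zr_k(s_0,s_1)(1-\sigma)$, applied with $(s_0,s_1) = (0,1)$ and with $(1,1)$. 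Injectivity of $\F$ is then immediate: any two distinct $\sigma,\sigma' \in \bG_\infty$ lie jointly in some $\bG_N$, and $\F_N$ is strictly monotone on $\bG_N$ by \eqref{mono} (via $\hF_k \circ \Pi_k \equiv \F_k \circ \Id_k$), hence injective, so $\F(\sigma) \neq \F(\sigma')$.

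For surjectivity, fix a reduced fraction $p/n \in \bQ \cap [0,1[$. The unimodular relation \eqref{ggT1} together with Corollary \ref{coro:rk} gives $\ggT(\zr_k(\sigma),\h_k(\sigma)) = 1$ for every $\sigma \in \bG_k$, so each value $\F_k(\sigma)$ is already in reduced form. The set of reduced fractions in $[0,1[$ with denominator exactly $n$ has cardinality $\varphi(n)$, including the case $n=1$, which consists of the single fraction $0/1$. By the result of Knauf quoted in the paragraph preceding the proposition, $\varphi_k(n) = \varphi(n)$ whenever $k \geq n-1$. Since $\F_k$ maps the $\varphi_k(n)$ elements of $\{\sigma \in \bG_k : \h_k(\sigma) = n\}$ injectively into the $\varphi(n)$ reduced fractions in $[0,1[$ with denominator $n$, equality of cardinalities forces this restriction to be a bijection onto that set. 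In particular $p/n = \F_k(\sigma)$ for some $\sigma \in \bG_k$, and hence $p/n = \F(\sigma)$ viewed in $\bG_\infty$.

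I do not expect a serious obstacle here: monotonicity, the coprimality statement derived from the unimodular relation, and Knauf's identity $\varphi_k(n) = \varphi(n)$ for $n \leq k+1$ are all already in place. The only item requiring a moment's care is the $n=1$ edge case in the counting, which is handled uniformly by the convention $\varphi(1) = 1$ matching the single reduced fraction $0/1 \in [0,1[$ that corresponds to $0 \in \bG_0 \hookrightarrow \bG_\infty$.
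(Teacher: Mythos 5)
Your proof is correct and follows exactly the route the paper intends: the paper gives no explicit proof, merely asserting the proposition "all in all" from the monotonicity \eqref{mono} (injectivity), the coprimality coming from the unimodular relation \eqref{ggT1}, and Knauf's identity $\varphi_k(p)=\varphi(p)$ for $p\leq k+1$ (surjectivity by counting). You have simply written out the details, including the compatibility $\F_{k+1}(\sigma,0)=\F_k(\sigma)$ and the $n=1$ edge case, both of which check out.
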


\section{Positivity of Limit-Fourier Transform}

\subsection{Fourier Transform}
Since $\bG_k$ is a locally compact Abelian group we have a Fourier transform with respect to the Haar measure. For $k \in \bN$ the set  
$\mA_k := \{f\colon\bG_k \rightarrow \bR \}$ of real-valued observables forms an algebra (with addition and multiplication).

The Fourier transform $\mathcal{F}_k\colon\mA_k \rightarrow \mA_k$ 
is defined by 
\begin{align*}
\left( \mathcal{F}_k f\right)\left(\tau \right):=\hat{f}(\tau):= 2^{-k}\sum_{\sigma \in \bG_k}{ (-1)^{\sigma \cdot \tau}f(\sigma)},~~~~(\tau\in \bG_k). 
\end{align*}

The interaction coefficients $j_k$ of the Farey function $\F_k$ are defined by the negative Fourier transform of $\F_k$, i.\,e.
\begin{align*}
j_k(\tau):=-\left( \mathcal{F}_k\F_k \right) (\tau)=- 2^{-k}\sum_{\sigma \in \bG_k}{(-1)^{\sigma\cdot \tau} \F_k(\sigma)} 
\end{align*}
for $\tau \in \bG_k$.

An observable $f\in \mA_k$ is called \emph{strictly ferromagnetic}, if $(\mF_k f)(\tau)\geq 0$ for all $\tau \in \bG_k$  and \emph{weakly ferromagnetic}, if $(\mF_k f)(\tau)\geq 0 $ for all $\tau \in \bG_k\setminus \{0\}$. 
Obviously, the strictly ferromagnetic observables, denoted by $\mC_k \subseteq \mA_k$, form a multiplicative cone, i.\,e. for $f,g \in \mC_k$ and $\lambda \geq 0$ we have $\lambda f \in \mC_k,f+g \in \mC_k$, and $f\cdot g \in \mC_k$. 
Now we have a necessary and sufficient condition for preserving strict ferromagnetism  under composition:  
\begin{proposition}[\cite{Knauf93}, Prop. 3.2]\label{pro:perma}
 For $y \in \, ]0,\infty]$ let $g\in C^{\infty}(]-y,y[\,,\bR)$ be a function whose expansion 
\begin{align*}
 g(x)=\sum_{i=0}^{\infty}{c_i x^i}
\end{align*}
is absolutely convergent on the interval $\,]-y,y[$. Consider the set
\begin{align*}
 Y_k:=\{f\in \mA_k: f(\bG_k)\subseteq \, ]-y,y[ \, \}
\end{align*}
of observables. Then the map $\mathcal{G}_k\colon Y_k \rightarrow \mA_k$, $f\mapsto g\circ f$ 
preserves strict ferromagnetism, i.\,e. 
\begin{align*}
 \mathcal{G}_k(\mC_k \cap Y_k)\subseteq \mC_k \text{~~~for all~} k \in \bN_0,
\end{align*}
if and only if $c_i \geq 0$ for all $i \in \bN_0$. 
\end{proposition}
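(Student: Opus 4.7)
My plan is to split the biconditional into sufficiency and necessity, with most of the work concentrated in the necessity direction. For the ``if'' direction, I would suppose $c_i \geq 0$ for every $i \in \bN_0$. Since $\mC_k$ is a multiplicative cone and $f \in \mC_k$, each partial sum $S_N := \sum_{i=0}^{N} c_i f^i$ lies in $\mC_k$. Because $\bG_k$ is finite and $f(\bG_k) \subseteq\, ]-y,y[\,$, we have $\max_{\sigma \in \bG_k} |f(\sigma)| < y$, so $S_N \to g \circ f$ uniformly on $\bG_k$. The Fourier transform $\mF_k$ is continuous on the finite-dimensional space $\mA_k$, hence $(\mF_k S_N)(\tau) \to (\mF_k(g \circ f))(\tau)$ for every $\tau \in \bG_k$, and non-negativity passes to the limit.

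For the ``only if'' direction, I would argue contrapositively: assume $c_N < 0$ for some $N \in \bN_0$ and exhibit $k$ together with $f \in \mC_k \cap Y_k$ such that $g \circ f \notin \mC_k$. The case $N = 0$ is immediate with $f \equiv 0$, giving $(\mF_k(g \circ f))(0) = c_0 < 0$. For $N \geq 1$ I would choose $k \geq N$, pick linearly independent $\tau_1, \ldots, \tau_N \in \bG_k$, and consider
\begin{align*}
f_t(\sigma) := t \sum_{j=1}^{N} (-1)^{\sigma \cdot \tau_j}, \qquad t > 0.
\end{align*}
Then $f_t \in \mC_k$ (its Fourier coefficients equal $t$ at each $\tau_j$ and vanish elsewhere) and $f_t \in Y_k$ once $Nt < y$. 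The decisive calculation is at $\tau_* := \tau_1 + \cdots + \tau_N$: expanding $f_t^i$ into characters shows that $(\mF_k f_t^i)(\tau_*)$ equals $t^i$ times the number of ordered tuples $(j_1,\ldots,j_i) \in \{1,\ldots,N\}^i$ with $\tau_{j_1} + \cdots + \tau_{j_i} = \tau_*$ in $\bG_k$. Linear independence of the $\tau_j$ forces each index to appear an odd number of times, so this count vanishes for $i < N$, equals $N!$ for $i = N$, and for $i > N$ is a non-negative integer (positive precisely when $i \equiv N \pmod 2$). Therefore
\begin{align*}
(\mF_k(g \circ f_t))(\tau_*) = c_N\, N!\, t^N + O(t^{N+2}),
\end{align*}
which is strictly negative for all sufficiently small $t > 0$, contradicting $g \circ f_t \in \mC_k$.

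I expect the main obstacle to be locating the right trial function in the necessity direction. Naive candidates such as $f_t = t(1 + (-1)^{\sigma \cdot \tau})$ have all their powers confined to the two-dimensional span of $1$ and a single character, so they only produce a constraint on $g(2t) - c_0$ and cannot separate individual Taylor coefficients. The separated $N$-character sum bypasses this collapse through the parity-and-independence mechanism above, being tailored so that the $N$th power is the lowest one contributing at $\tau_*$. Once this trial function is chosen, membership in $\mC_k \cap Y_k$, the uniform convergence of the partial sums on the finite space $\bG_k$, and the passage of non-negativity through $\mF_k$ are all routine.
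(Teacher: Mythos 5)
The paper offers no proof of this proposition to compare against: it is imported verbatim from Knauf (\cite{Knauf93}, Prop.~3.2) and used as a black box in the proof of Lemma \ref{lem:ferrosym}. Judged on its own, your argument is correct and complete in outline. Sufficiency is the routine cone argument (note you implicitly use that $f^0\equiv 1\in\mC_k$, whose transform is $\delta_{\tau,0}\geq 0$), and since $\bG_k$ is finite the passage of $\mF_k$ through the limit of the partial sums is immediate. The necessity direction is the substantive part, and your trial function does the job: with $\tau_1,\dots,\tau_N$ linearly independent over $\bF_2$ and $\tau_*=\tau_1+\cdots+\tau_N$, a tuple $(j_1,\dots,j_i)$ contributes to $(\mF_k f_t^i)(\tau_*)$ exactly when every index occurs an odd number of times, which forces $i\geq N$, gives the count $N!$ at $i=N$, and yields non-negative counts above. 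Two small points deserve an explicit sentence in a written version: first, the term-by-term identity $(\mF_k(g\circ f_t))(\tau_*)=\sum_i c_i(\mF_k f_t^i)(\tau_*)$ should be justified by absolute convergence on the finite set $f_t(\bG_k)$; second, the $O(t^{N+2})$ remainder needs the crude bound that the number of contributing tuples is at most $N^i$, so the tail is dominated by $\sum_{i>N}|c_i|(Nt)^i$, which converges for $Nt<y$ and is $o(t^N)$ on, say, $t\leq y/(2N)$. With those remarks added, the proof stands; it is essentially the standard argument for this kind of permanence statement, of which Knauf's original proof (using the coordinate characters $\sigma\mapsto(-1)^{\sigma_j}$ as the $\tau_j$) is a special case.
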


\subsection{Estimates of the Interaction Coefficients}
First of all we start with a symmetry property of the Farey function. 
\begin{lemma}\label{symhr}
For $k\in \bN_0$ and $s\in \hG_k$ we have
\begin{enumerate}[(i)]
 \item ~~~$\hr_k(s)+\hr_k(2^k-s)=\hh_k(s)$;
 \item ~~~$\hh_k(2^k-s)=\hh_k(s)$.
\end{enumerate}
\end{lemma}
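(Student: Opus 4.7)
The plan is to prove both statements simultaneously by induction on $k$, using the inductive definitions of $\hh_k$ and $\hr_k$. Intuitively the statement says that the order-reversing involution $s \mapsto 2^k - s$ on $\hG_k$ implements the symmetry $\hF_k(s) \mapsto 1-\hF_k(s)$ of the extended Farey fractions; once one believes this geometric picture, the inductive proof is essentially forced.

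\medskip

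\textbf{Base case.} For $k=0$ one checks the two cases $s=0,1$ directly from $\hr_0(0)=0$, $\hr_0(1)=\hh_0(0)=\hh_0(1)=1$.

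\medskip

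\textbf{Inductive step.} Assume (i) and (ii) hold for some $k\in\bN_0$; I want to deduce them for $k+1$, where $s\in\hG_{k+1}$.

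First, handle the even case $s=2t$ with $t\in \hG_k$. Then $2^{k+1}-s = 2(2^k-t)$, and by the rule $\hh_{k+1}(2u)=\hh_k(u)$, $\hr_{k+1}(2u)=\hr_k(u)$ both sides collapse to the $k$-level statements, which hold by induction.

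Next, the odd case $s=2t+1$ with $t\in\hG_k\setminus\{2^k\}$. Here
\begin{align*}
2^{k+1}-s \;=\; 2(2^k-t-1)+1,
\end{align*}
and since $t\le 2^k-1$, the index $2^k-t-1$ lies in $\hG_k\setminus\{2^k\}$, so the odd recursion applies. For (ii) I compute
\begin{align*}
\hh_{k+1}(2^{k+1}-s) \;=\; \hh_k(2^k-t-1)+\hh_k(2^k-t) \;=\; \hh_k(t+1)+\hh_k(t) \;=\; \hh_{k+1}(s),
\end{align*}
the middle equality being the induction hypothesis (ii) at the two consecutive points $t$ and $t+1$ (both in $\hG_k$). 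For (i) the same recursion combined with the induction hypothesis (i) at $t$ and at $t+1$ gives
\begin{align*}
\hr_{k+1}(2^{k+1}-s) \;=\; \bigl(\hh_k(t+1)-\hr_k(t+1)\bigr) + \bigl(\hh_k(t)-\hr_k(t)\bigr) \;=\; \hh_{k+1}(s)-\hr_{k+1}(s).
\end{align*}

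\medskip

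\textbf{Main obstacle.} The only thing to be careful about is that the odd recursion requires the index to lie in $\hG_k\setminus\{2^k\}$, which forces a separate verification at the ``boundary'' values $s=0$ and $s=2^{k+1}$ in the even case (they correspond to $t=0$ and $t=2^k$, both of which are legal inputs to $\hh_k$ and $\hr_k$, so nothing actually goes wrong). Beyond that bookkeeping, everything is a direct unfolding of the definitions of $\hh_{k+1}$ and $\hr_{k+1}$, and the coupling between (i) and (ii) means they must be proved together rather than sequentially.
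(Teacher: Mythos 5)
Your proof is correct and takes the same approach the paper intends: the paper's proof is just the one-line remark that the lemma ``could be easily shown by induction,'' and your simultaneous induction on $k$, splitting into even and odd indices and using the recursions for $\hh_{k+1}$ and $\hr_{k+1}$, is exactly the argument being alluded to. The index bookkeeping (checking $2^k-t-1\in\hG_k\setminus\{2^k\}$ in the odd case) is handled correctly.
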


\begin{proof} 
It could be easily shown by induction. 
\end{proof}

Now we have a lower bound for the interaction coefficients: 
\begin{proposition}\label{prop:L1}
For $k\in \bN$ we have 
\begin{align*}
j_k(0)=-\frac{1}{2}\left(1 - 2^{-k}  \right) < 0
\end{align*}
and
\begin{align*}
j_k(0) \leq j_k(\tau)
\end{align*}
for all $\tau \in \bG_k$.
\end{proposition}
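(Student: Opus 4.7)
The plan is to compute $j_k(0)$ explicitly and then use a trivial pointwise bound to handle the general $\tau$. Setting $\tau=0$ in the definition gives
\begin{align*}
j_k(0) = -2^{-k}\sum_{\sigma\in \bG_k}\F_k(\sigma),
\end{align*}
so everything reduces to evaluating $\sum_{\sigma\in \bG_k}\F_k(\sigma)$. Via the bijection $\Id_k$ and Corollary \ref{coro:hk} this sum coincides with $\sum_{s=0}^{2^k-1}\hF_k(s)$, and I would attack it using Lemma \ref{symhr}.

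From Lemma \ref{symhr} one directly obtains the symmetry
\begin{align*}
\hF_k(s)+\hF_k(2^k-s)=\frac{\hr_k(s)+\hr_k(2^k-s)}{\hh_k(s)}=1
\end{align*}
for every $s\in\hG_k$. I would then pair $s$ with $2^k-s$ in the range $s\in\{1,\dots,2^k-1\}$, isolating the fixed point $s=2^{k-1}$ (where $\hF_k(2^{k-1})=1/2$). The remaining $2^{k-1}-1$ pairs each contribute $1$, and together with $\hF_k(0)=0$ and the leftover $1/2$ the total is $(2^k-1)/2$. Substituting back yields
\begin{align*}
j_k(0)=-2^{-k}\cdot\frac{2^k-1}{2}=-\frac{1}{2}\bigl(1-2^{-k}\bigr)<0.
\end{align*}

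For the comparison inequality, the idea is essentially one line: since $\F_k\geq 0$ and $(-1)^{\sigma\cdot\tau}\leq 1$, we have pointwise $(-1)^{\sigma\cdot\tau}\F_k(\sigma)\leq \F_k(\sigma)$. Averaging over $\sigma\in\bG_k$ and negating gives
\begin{align*}
j_k(\tau)=-2^{-k}\sum_{\sigma\in\bG_k}(-1)^{\sigma\cdot\tau}\F_k(\sigma)\geq -2^{-k}\sum_{\sigma\in\bG_k}\F_k(\sigma)=j_k(0),
\end{align*}
for every $\tau\in\bG_k$.

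I do not expect any genuine obstacle; the whole content sits in the symmetry of Lemma \ref{symhr}. The only point requiring a little care is the parity bookkeeping when pairing $s$ with $2^k-s$, in particular remembering that $s=2^{k-1}$ is self-paired and contributes $1/2$ rather than a full $1$.
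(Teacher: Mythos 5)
Your proof is correct and follows essentially the same route as the paper: the value of $j_k(0)$ is obtained by the same pairing $s \leftrightarrow 2^k-s$ via Lemma \ref{symhr}, and your pointwise bound $(-1)^{\sigma\cdot\tau}\F_k(\sigma)\leq \F_k(\sigma)$ is precisely the consequence of \eqref{mono} (namely $\F_k\geq 0$) that the paper invokes for the second claim. No issues.
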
 

\begin{proof} 
Using Lemma \ref{symhr} we have $\hF_k(2^k-s)+\hF_k(s) = 1$
for all $s\in \hG_k$, $k\in \bN$. Therefore we get 
\begin{align*}
j_k(0) & =- 2^{-k}\sum_{\sigma \in \bG_k}{\F_k(\sigma)} = - 2^{-k}\sum_{a\in\{0,\ldots,2^k-1 \}}{\hF_k(a)}\\
       & =- 2^{-k}\left(\hF_k(2^{k-1}) + \sum_{a\in\{ 1,\ldots, 2^{k-1}-1\}}{\left[\hF_k(a)+\hF_k(2^k-a) \right]} \right) \\
       & =- 2^{-1}\left(1-2^{-k} \right). 
\end{align*}
The second claim follows from \eqref{mono}. 
\end{proof}

Now we get an upper bound for the interaction coefficients. 

\begin{proposition}
For $k\in \bN$ and $\tau_0:=(1,0_{k-1})\in \bG_k$ we have
\begin{align*}
j_k(\tau_0) > 0 
\end{align*}
and
\begin{align*}
j_k(\tau_0)\geq j_k(\tau)
\end{align*}
for all $\tau \in \bG_k$. 
\end{proposition}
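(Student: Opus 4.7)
The plan is to rewrite $j_k(\tau)$ so that maximizing it becomes minimizing a sum of $2^{k-1}$ values of $\F_k$, and then invoke the strict monotonicity \eqref{mono}. For any $\tau\in\bG_k$ split the Fourier sum according to the value of $\sigma\cdot\tau\in\bZ/2\bZ$:
\begin{align*}
j_k(\tau)\ =\ -2^{-k}\sum_{\sigma\in\bG_k}(-1)^{\sigma\cdot\tau}\F_k(\sigma)\ =\ 2^{-k}\bigl(T-2S_0(\tau)\bigr),
\end{align*}
where $S_0(\tau):=\sum_{\sigma\cdot\tau=0}\F_k(\sigma)$ and $T:=\sum_{\sigma\in\bG_k}\F_k(\sigma)$ does not depend on $\tau$. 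Thus $j_k(\tau)\le j_k(\tau_0)$ will follow from $S_0(\tau)\ge S_0(\tau_0)$ for all $\tau\in\bG_k$.

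For $\tau_0=(1,0_{k-1})$ the condition $\sigma\cdot\tau_0=0$ means $\sigma_1=0$; under the lex-order bijection $\Id_k$ from Section~2, combined with Corollary~\ref{coro:hk}, this yields $S_0(\tau_0)=\sum_{s=0}^{2^{k-1}-1}\hF_k(s)$, namely the sum over the lower half of the $k$-th row of Table~\ref{tab:tab1}. By the strict monotonicity \eqref{mono} these are precisely the $2^{k-1}$ smallest values of $\F_k$. For every $\tau\ne 0$, the set $\{\sigma\in\bG_k:\sigma\cdot\tau=0\}$ is the kernel of a surjective homomorphism $\bG_k\to\bZ/2\bZ$, hence a subgroup of index two containing exactly $2^{k-1}$ elements; therefore $S_0(\tau)$ is a sum of $2^{k-1}$ (pairwise distinct) values of $\F_k$, which cannot be smaller than the sum of the $2^{k-1}$ smallest, and $S_0(\tau)\ge S_0(\tau_0)$ follows. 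For $\tau=0$ we simply have $S_0(0)=T\ge S_0(\tau_0)$ since $T-S_0(\tau_0)=\sum_{s=2^{k-1}}^{2^k-1}\hF_k(s)\ge 0$. Combining these cases gives $j_k(\tau)\le j_k(\tau_0)$ for every $\tau\in\bG_k$.

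For the strict positivity $j_k(\tau_0)>0$, shifting the summation index by $2^{k-1}$ provides a bijection between $\{0,\dots,2^{k-1}-1\}$ and $\{2^{k-1},\dots,2^k-1\}$, so
\begin{align*}
j_k(\tau_0)\ =\ 2^{-k}\sum_{s=0}^{2^{k-1}-1}\bigl(\hF_k(s+2^{k-1})-\hF_k(s)\bigr)\ >\ 0,
\end{align*}
each summand being strictly positive by \eqref{mono}.

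I do not foresee a serious obstacle: the entire argument rests on the strict monotonicity of $\hF_k$ in \eqref{mono} together with elementary bookkeeping of the Fourier sum. The only slightly delicate step is the identification of $\{\sigma\in\bG_k:\sigma_1=0\}$ with $\{0,\dots,2^{k-1}-1\}\subseteq\hG_k$ under $\Id_k^{-1}$, but this is immediate from Corollary~\ref{coro:hk} and the definition of the lex order.
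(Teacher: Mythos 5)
Your proof is correct and follows essentially the same route as the paper: both identify $j_k(\tau_0)$ as the signed sum in which the character puts $+1$ on the lexicographically smaller half and $-1$ on the larger half, and then use the strict monotonicity \eqref{mono} to conclude both positivity and maximality. Your write-up merely makes explicit the extremal step (every index-two kernel picks out $2^{k-1}$ values, so $S_0(\tau)\geq S_0(\tau_0)$) that the paper compresses into a one-line appeal to the orthogonality relation.
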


\begin{proof} 
We can deduce from the orthogonal relation of the characters that
\begin{align*}
j_k(\tau_0)= - \sum_{\sigma \in \bG_{k-1}}{\left(\F_k(0,\sigma)-\F_k(1,\sigma)\right)} \geq -\sum_{\sigma \in \bG_k}{(-1)^{\sigma \cdot \tau}}\F_k(\sigma) = j_k(\tau)
\end{align*}
for all $\tau\in \bG_k$. $\F_k$ is strictly increasing and non-negative, therefore we get $j_k(\tau_0)>0$.
\end{proof}

\begin{proposition}
 The thermodynamic limit of the interaction coefficients 
 \begin{align*}
  j(\tau):=\lim_{k\rightarrow \infty}{j_k(\p_k(\tau))} 
 \end{align*}
exists for all $\tau \in \bG_\infty$.
\end{proposition}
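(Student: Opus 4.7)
The plan is to show that $(j_k(\p_k(\tau)))_{k\in \bN}$ is a Cauchy sequence in $\bR$. Fix $\tau \in \bG_\infty$ and, using that $\tau$ has only finitely many nonzero entries, choose $k_0 \in \bN$ with $\tau_i = 0$ for all $i > k_0$; then $\p_{k+1}(\tau) = (\p_k(\tau),0)$ for every $k \geq k_0$. Writing each $\sigma \in \bG_{k+1}$ as $(\sigma',\sigma_{k+1})$ with $\sigma' \in \bG_k$, the character factor $(-1)^{\sigma \cdot \p_{k+1}(\tau)} = (-1)^{\sigma' \cdot \p_k(\tau)}$ is independent of $\sigma_{k+1}$, so that
\begin{align*}
j_{k+1}(\p_{k+1}(\tau)) = -2^{-(k+1)} \sum_{\sigma' \in \bG_k} (-1)^{\sigma' \cdot \p_k(\tau)} \bigl[\F_{k+1}(\sigma',0) + \F_{k+1}(\sigma',1)\bigr].
\end{align*}

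Next I would relate the bracketed sum to $\F_k(\sigma')$ using Corollary \ref{coro:rk}: if $s$ is the integer representative of $\sigma'$ under $\Id_k$, then $(\sigma',0)\in\bG_{k+1}$ corresponds to $2s$ and $(\sigma',1)$ to $2s+1$ in $\hG_{k+1}$. Consequently $\F_{k+1}(\sigma',0) = \hF_{k+1}(2s) = \hF_k(s) = \F_k(\sigma')$, whereas $\F_{k+1}(\sigma',1) = \hF_{k+1}(2s+1)$ is the mediant of $\hF_k(s)$ and $\hF_k(s+1)$. A direct calculation with common denominators, invoking the unimodular relation \eqref{ggT1}, yields
\begin{align*}
\F_{k+1}(\sigma',0) + \F_{k+1}(\sigma',1) - 2\F_k(\sigma') = \frac{1}{\hh_k(s)\,\hh_{k+1}(2s+1)} = \hF_{k+1}(2s+1) - \hF_{k+1}(2s),
\end{align*}
which I denote by $\delta_k(s) > 0$.

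Substituting this identity above and subtracting $j_k(\p_k(\tau))$ gives
\begin{align*}
|j_{k+1}(\p_{k+1}(\tau)) - j_k(\p_k(\tau))| \leq 2^{-(k+1)} \sum_{s=0}^{2^k-1} \delta_k(s).
\end{align*}
The key observation is that the $\delta_k(s)$ are precisely the differences of $\hF_{k+1}$ at consecutive positions $2s$ and $2s+1$; by the monotonicity \eqref{mono} of $\hF_{k+1}$, this sum telescopes and is bounded by the total variation $\hF_{k+1}(2^{k+1}) - \hF_{k+1}(0) = 1$. Hence the increments are bounded by $2^{-(k+1)}$ for all $k \geq k_0$, they are absolutely summable, so $(j_k(\p_k(\tau)))_k$ is Cauchy and converges to some $j(\tau) \in \bR$.

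The main technical step is the explicit identification of the "mediant gap" $\F_{k+1}(\sigma',0) + \F_{k+1}(\sigma',1) - 2\F_k(\sigma')$ via the unimodular relation; once this gap is recognized as a consecutive difference of $\hF_{k+1}$, the telescoping bound and the Cauchy property are immediate. Everything else reduces to elementary reindexing.
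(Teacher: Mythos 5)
Your proof is correct and takes essentially the same route as the paper: both reduce the increment $j_{k+1}(\p_{k+1}(\tau))-j_k(\p_k(\tau))$ to $2^{-(k+1)}$ times a signed sum of the consecutive gaps $\hF_{k+1}(2s+1)-\hF_{k+1}(2s)$ and bound that sum by $1$ using monotonicity \eqref{mono} and the telescoping comparison with $\hF_{k+1}(2s+2)-\hF_{k+1}(2s)$. Your explicit evaluation of the gap via the unimodular relation \eqref{ggT1} is a harmless extra (only positivity of the gaps is needed), and your care in fixing $k_0$ with $\p_{k+1}(\tau)=(\p_k(\tau),0)$ for $k\geq k_0$ makes explicit a point the paper leaves implicit.
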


\begin{proof} We show that 
\begin{align*}
\left|j_k(\tau)-j_{k+1}(\tau,0) \right| \leq 2^{-k-1} 
\end{align*}
for $k\in \bN$ and $\tau \in \bG_k$. 
Since the series $\sum_{k\in \bN_0}{2^{-k}}$ is convergent, the Cauchy criterion implies convergence of the interaction coefficients. 
 We have 
\begin{align*}
      & j_k(\tau)-j_{k+1}(\tau,0) \\
        = &  -2^{-k}\sum_{\sigma \in \bG_k}{(-1)^{\sigma \cdot \tau}\F_k(\sigma)} +2^{-k-1}\sum_{\sigma \in \bG_{k}}{(-1)^{\sigma \cdot \tau}\left[\F_{k+1}(\sigma,0)+\F_{k+1}(\sigma,1)\right]}\\
    = &\, 2^{-k-1} \sum_{\sigma \in \bG_{k}}{(-1)^{\sigma \cdot \tau}\left[\F_{k+1}(\sigma,1)-\F_{k+1}(\sigma,0)\right]}.
\end{align*}
Therefore we get a telescoping sum
\begin{align*}
	   & \left| j_k(\tau)-j_{k+1}(\tau,0) \right| \\
      \leq &\, 2^{-k-1} \sum_{a \in \{0,\cdots, 2^k-1 \}}{\left[\hF_{k+1}(2a+1)-\hF_{k+1}(2a)\right]} \\
      \leq &\, 2^{-k-1} \sum_{a \in \{0,\cdots, 2^k-1 \}}{\left[\hF_{k+1}(2a+2)-\hF_{k+1}(2a)\right]} \\
    \leq &\, 2^{-k-1}.
\end{align*}
\end{proof}

Now we provide an upper bound of the interaction $j(\tau)$ depending on the support of $\tau \in \bG_{\infty}$.

\begin{lemma}\label{lem:sum}
 For $k\in \bN$ we have
 \begin{align*}
  \sum_{s\in \{0,\ldots,2^k-1\} }{\frac{1}{\hh_k(s)\hh_k(s+1)}}= 1.
 \end{align*}
\end{lemma}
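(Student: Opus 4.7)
The plan is to exploit the unimodular relation \eqref{ggT1} to rewrite each summand $\frac{1}{\hh_k(s)\hh_k(s+1)}$ as a difference of two consecutive extended Farey fractions, thereby collapsing the sum via telescoping.

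Concretely, for $s\in\hG_k\setminus\{2^k\}$ we have $\hh_k(s)\hh_k(s+1)>0$, so dividing the unimodular identity
\begin{align*}
\hh_k(s)\cdot\hr_k(s+1)-\hh_k(s+1)\cdot\hr_k(s)=1
\end{align*}
by $\hh_k(s)\hh_k(s+1)$ gives
\begin{align*}
\frac{1}{\hh_k(s)\hh_k(s+1)}=\frac{\hr_k(s+1)}{\hh_k(s+1)}-\frac{\hr_k(s)}{\hh_k(s)}=\hF_k(s+1)-\hF_k(s).
\end{align*}

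Summing this identity over $s\in\{0,\ldots,2^k-1\}$ yields a telescoping sum, and the monotonicity statement \eqref{mono} together with the boundary values $\hF_k(0)=0$ and $\hF_k(2^k)=1$ then give
\begin{align*}
\sum_{s=0}^{2^k-1}\frac{1}{\hh_k(s)\hh_k(s+1)}=\hF_k(2^k)-\hF_k(0)=1.
\end{align*}

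There is no real obstacle: once one has the unimodular relation \eqref{ggT1} established in the preceding section, the identity reduces to a one-line telescoping argument. (As an alternative, one could proceed by induction on $k$, splitting the sum over $\hG_{k+1}$ into even and odd indices and using the mediant recursion $\hh_{k+1}(2a+1)=\hh_k(a)+\hh_k(a+1)$ together with the partial fraction identity $\frac{1}{a(a+b)}+\frac{1}{(a+b)b}=\frac{1}{ab}$; but the telescoping approach is shorter and more transparent.)
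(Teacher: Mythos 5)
Your proof is correct, but it takes a different route from the paper. The paper argues by induction on $k$: the base case $k=1$ is checked directly, and the induction step splits the sum over $\{0,\ldots,2^{k+1}-1\}$ into the pairs $(2a,2a+1)$, applies the recursions $\hh_{k+1}(2a)=\hh_k(a)$, $\hh_{k+1}(2a+1)=\hh_k(a)+\hh_k(a+1)$, $\hh_{k+1}(2a+2)=\hh_k(a+1)$, and collapses each pair via the partial fraction identity $\frac{1}{a(a+b)}+\frac{1}{(a+b)b}=\frac{1}{ab}$ --- exactly the alternative you sketch in your closing parenthesis. Your main argument instead divides the unimodular relation \eqref{ggT1} by $\hh_k(s)\hh_k(s+1)$ to obtain $\frac{1}{\hh_k(s)\hh_k(s+1)}=\hF_k(s+1)-\hF_k(s)$ and telescopes. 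Both are sound; the paper's induction has the virtue of relying only on the recursive definition of $\hh_k$ and not on \eqref{ggT1}, while your telescoping version is shorter and makes explicit that the consecutive gaps of the extended Farey fractions are \emph{exactly} $1/(\hh_k(s)\hh_k(s+1))$ --- a fact the paper later uses (as an inequality) in the proof of Proposition \ref{prop:abfall}, so your argument also explains why that chain of estimates is tight.
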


\begin{proof}
For $k=1$ the claimed identity holds. 
By induction hypothesis we get
\begin{align*}
   ~ & \sum_{s\in \{0,\ldots,2^{k+1}-1\} }{\frac{1}{\hh_{k+1}(s)\hh_{k+1}(s+1)}} \\
   = & \sum_{a\in \{0,\ldots,2^{k}-1\}}{\left[ \frac{1}{\hh_{k+1}(2a)\hh_{k+1}(2a+1)}+\frac{1}{\hh_{k+1}(2a+1)\hh_{k+1}(2a+2)}\right]} \\
   = & \sum_{a\in \{0,\ldots,2^{k}-1\}}\frac{1}{\hh_{k}(a)\hh_{k}(a+1)}
   = 1. 
\end{align*} 
\end{proof}

\begin{proposition}\label{prop:abfall}
 For all $\tau \in \bG_{\infty}\setminus \{0 \}$ we have
 \begin{align*}
  j(\tau) \leq 2^{-\max(\operatorname{supp}(\tau))}. 
 \end{align*}
\end{proposition}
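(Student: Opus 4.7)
Let $m := \max(\operatorname{supp}(\tau))$. The plan is to estimate $j_k(\p_k(\tau))$ uniformly in $k \geq m$ and then pass to the limit using the existence of $j(\tau)$ established just above. For $k \geq m$, set $\tau' := \p_k(\tau) \in \bG_k$; then $\tau'_m = 1$ and $\tau'_i = 0$ for $i > m$. The first equality lets me pair every $\sigma \in \bG_k$ with $\sigma + e_m$, where $e_m := (0_{m-1}, 1, 0_{k-m})$ is the $m$-th standard basis vector, because $(-1)^{(\sigma + e_m) \cdot \tau'} = -(-1)^{\sigma \cdot \tau'}$. Rearranging the definition of $j_k$ in this way gives
\begin{align*}
 j_k(\tau') = 2^{-k} \sum_{\sigma \in \bG_k,\ \sigma_m = 0} (-1)^{\sigma \cdot \tau'} \bigl[ \F_k(\sigma + e_m) - \F_k(\sigma) \bigr],
\end{align*}
and since $\sigma + e_m$ exceeds $\sigma$ in the lexicographic order, property \eqref{mono} forces each bracket to be non-negative. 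The triangle inequality then yields
\begin{align*}
 |j_k(\tau')| \leq 2^{-k} \sum_{\sigma:\, \sigma_m = 0} \bigl[ \F_k(\sigma + e_m) - \F_k(\sigma) \bigr].
\end{align*}

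Next I would transfer the right-hand side to $\hG_k$ via $\Id_k$. A configuration $\sigma$ with $\sigma_m = 0$ corresponds to $s = u \cdot 2^{k-m+1} + v$ with $0 \leq u < 2^{m-1}$ and $0 \leq v < 2^{k-m}$, and its partner $\sigma + e_m$ corresponds to $s + 2^{k-m}$; both indices lie inside the block $[u \cdot 2^{k-m+1},\ (u+1) \cdot 2^{k-m+1}]$. Monotonicity of $\hF_k$ therefore gives, for each fixed $u$ and every admissible $v$,
\begin{align*}
 \hF_k(s + 2^{k-m}) - \hF_k(s) \leq \hF_k\bigl((u+1) \cdot 2^{k-m+1}\bigr) - \hF_k\bigl(u \cdot 2^{k-m+1}\bigr).
\end{align*}
Summing these $2^{k-m}$ estimates over $v$, and then telescoping the result over $u = 0, \ldots, 2^{m-1} - 1$, the right-hand side collapses to $2^{k-m} \cdot [\hF_k(2^k) - \hF_k(0)] = 2^{k-m}$.

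Combining the two steps gives $|j_k(\p_k(\tau))| \leq 2^{-m}$ for every $k \geq m$; taking $k \to \infty$ via the preceding existence result delivers $j(\tau) \leq |j(\tau)| \leq 2^{-m}$. The most delicate point, as I see it, is the combinatorial bookkeeping that identifies ``flipping $\sigma_m$ in $\bG_k$'' with ``adding $2^{k-m}$ to $s$ in $\hG_k$ without any carry'' and then checks that across varying $u$ the enveloping blocks $[u \cdot 2^{k-m+1}, (u+1)\cdot 2^{k-m+1}]$ tile $\hG_k$; once that translation is in place, the monotonicity telescope completes the argument without any further appeal to the unimodular structure (in particular, Lemma \ref{lem:sum} enters only implicitly through $\hF_k(2^k) - \hF_k(0) = 1$).
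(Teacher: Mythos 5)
Your proof is correct, and its skeleton is the same as the paper's: isolate the spin flip at the coordinate $m=\max(\operatorname{supp}(\tau))$, note that every difference $\F_k(\sigma+e_m)-\F_k(\sigma)$ is non-negative by the monotonicity \eqref{mono}, and then bound the resulting sum of increments of $\hF_k$ by exploiting a disjointness/tiling structure of intervals in $\hG_k$. The only genuine divergence is in how that final count is organized. The paper fixes the tail $\sigma''\in\bG_{k-m}$, observes that as the head $\sigma'$ varies the intervals $[s,s+2^{k-m}]$ are pairwise disjoint, bounds the sum over heads by the total variation $\sum_{s}[\hF_k(s+1)-\hF_k(s)]$, evaluates this as $1$ via the unimodular relation \eqref{ggT1} together with Lemma \ref{lem:sum}, and multiplies by the $2^{k-m}$ tails. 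You instead fix the head $u$, bound each of the $2^{k-m}$ increments by the increment of $\hF_k$ over the enveloping block of length $2^{k-m+1}$, and telescope over $u$. Both routes yield $|j_k(\p_k(\tau))|\le 2^{-m}$ and hence the claim in the limit. Your version has the small expository advantage of making explicit that neither \eqref{ggT1} nor Lemma \ref{lem:sum} is really needed here --- monotonicity and telescoping suffice, as they would in the paper's variant too, since its total-variation sum already telescopes to $\hF_k(2^k)-\hF_k(0)=1$ without appeal to the unimodular structure.
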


\begin{proof}
 It suffices to prove that, for $k\in \bN$ and $n\in \bN_0$ with $n \leq k-1$,  
 \begin{align*}
  j_k(\tau,1,0_{k-n-1}) \leq 2 ^{-n-1}
 \end{align*}
is valid for all $\tau \in \bG_n$. We have 
 \begin{align*}
  ~ & ~\left| j_k(\tau,1,0_{k-n-1}) \right| \\
  = & ~\left| 2^{-k}\sum_{\sigma' \in \bG_n, \sigma'' \in \bG_{k-n-1}} {(-1)^{\sigma' \cdot \tau}\left[ \F_k(\sigma',1,\sigma'') -\F_k(\sigma',0,\sigma'') \right]}\right| \\
  \leq & ~ 2^{-k}\sum_{\sigma'' \in \bG_{k-n-1}} \sum_{\sigma' \in \bG_n}{\left[ \F_k(\sigma',1,\sigma'') -\F_k(\sigma',0,\sigma'') \right]} \\
  \leq & ~2^{n-1}, 
 \end{align*}
since 
\begin{align*}
 \sum_{\sigma' \in \bG_n,}{\left[ \F_k(\sigma',1,\sigma'') -\F_k(\sigma',0,\sigma'') \right]}   \leq &  \sum_{s \in \{0,\ldots, 2^k-1\}}{\left[ \hF_k(s+1) -\hF_k(s) \right]} \\
 \leq & \sum_{s\in \{0,\ldots,2^k-1\} }{\frac{1}{\hh_k(s)\hh_k(s+1)}} \\
 \leq & 1
\end{align*}
for all $k\in \bN$ and $\sigma ''\in \bG_{k-n-1}$ due to equation \eqref{ggT1} and \ref{lem:sum}. 
\end{proof}

\subsection{Proof of Theorem \ref{theo:ferro}}
Now we present the proof of Theorem \ref{theo:ferro}. As we have seen before, the cone $\mC_k$ of strictly ferromagnetic observables ensures useful structural properties that are not fulfilled by weakly ferromagnetic observables. Therefore we transform our problem of weak ferromagnetism to one of strong ferromagnetism. 

First of all, we need two well known facts about the numerator and denominator functions.

\begin{lemma}[\cite{Knauf93}, Lem. 4.4]\label{lem:mul}
 For $k\in \bN$ and $s_0, s_1 \in \bR$ we have
\begin{align*}
 \zr_k(s_0,s_1)=s_0 \cdot \zr_k(1,0)+s_1 \cdot \zr_k(0,1). 
\end{align*}
\end{lemma}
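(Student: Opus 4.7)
The statement is a linearity claim for the family $\zr_k(s_0,s_1)$ in the parameters $(s_0,s_1)$, so the natural approach is induction on $k$, tracking how the recursive definition propagates linearity.

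For the base case $k=1$, I would simply evaluate at both points: $\zr_1(s_0,s_1)(0)=s_0 = s_0\cdot \zr_1(1,0)(0)+s_1\cdot \zr_1(0,1)(0)$ since $\zr_1(1,0)(0)=1$ and $\zr_1(0,1)(0)=0$, and analogously at $1$. This verifies the identity pointwise.

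For the induction step, assume the identity holds for $k$ as an equality of functions on $\bG_k$. Given $(\sigma,\sigma_{k+1})\in\bG_{k+1}$, unfold the defining recursion
\begin{align*}
\zr_{k+1}(s_0,s_1)(\sigma,\sigma_{k+1}) = \zr_k(s_0,s_1)(\sigma)+\sigma_{k+1}\,\zr_k(s_0,s_1)(1-\sigma),
\end{align*}
substitute the inductive hypothesis into each of the two terms on the right, and regroup by collecting coefficients of $s_0$ and $s_1$. The bracket multiplying $s_0$ is precisely $\zr_k(1,0)(\sigma)+\sigma_{k+1}\zr_k(1,0)(1-\sigma)=\zr_{k+1}(1,0)(\sigma,\sigma_{k+1})$ by the recursion applied to the parameters $(1,0)$, and analogously for the $s_1$-bracket with parameters $(0,1)$. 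This yields the claim at level $k+1$.

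There is no real obstacle here; the only thing to be careful about is that the recursion evaluates the previous level at both $\sigma$ and $1-\sigma$, so the inductive hypothesis must be applied to both arguments before regrouping — but since the hypothesis is a function identity on all of $\bG_k$, this causes no issue.
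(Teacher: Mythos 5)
Your induction is correct: the base case checks out pointwise, and in the inductive step the recursion is linear in the pair of values $\left(\zr_k(s_0,s_1)(\sigma),\,\zr_k(s_0,s_1)(1-\sigma)\right)$, so substituting the hypothesis at both arguments and regrouping the $s_0$- and $s_1$-coefficients reproduces the recursion at level $k+1$ for the parameters $(1,0)$ and $(0,1)$. The paper itself gives no proof — it cites Knauf's Lemma 4.4 — and your argument is exactly the standard one for this linearity statement, so there is nothing to add.
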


\begin{lemma}[\cite{Knauf93}, Lem. 4.6]\label{lem:rek}
 For $k\in \bN$, $\sigma \in \bG_k$, $l\in \bN_0$, $\tau \in \bG_l$ and $s_0,s_1\in \bR$ we have 
\begin{align*}
 \zr_{k+l}(s_0,s_1)(\sigma, \tau) = \zr_{l+1}\left( \zr_k(s_0,s_1)(\sigma),\zr_k(s_0,s_1)(1-\sigma) \right)(0,\tau). 
\end{align*}
\end{lemma}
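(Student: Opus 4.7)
I would prove the identity by induction on $l$, with the recursion on the last coordinate (which is the defining recursion of $\zr$) as the main engine. The base case $l=0$ is immediate: there $\tau$ is empty, $(0,\tau) = (0) \in \bG_1$, and by definition $\zr_1(\zr_k(s_0,s_1)(\sigma),\zr_k(s_0,s_1)(1-\sigma))(0) = \zr_k(s_0,s_1)(\sigma)$, matching the left-hand side. For the inductive step, write $\tau' = (\tau,\tau_{l+1}) \in \bG_{l+1}$ with $\tau \in \bG_l$, and unpack the left-hand side via the defining recursion:
\begin{align*}
\zr_{k+l+1}(s_0,s_1)(\sigma,\tau,\tau_{l+1}) = \zr_{k+l}(s_0,s_1)(\sigma,\tau) + \tau_{l+1}\zr_{k+l}(s_0,s_1)(1-\sigma,1-\tau).
\end{align*}
Applying the inductive hypothesis to each summand (once with the pair $\sigma$, once with the flipped pair $1-\sigma$) yields, with $a := \zr_k(s_0,s_1)(\sigma)$ and $b := \zr_k(s_0,s_1)(1-\sigma)$, the expression $\zr_{l+1}(a,b)(0,\tau) + \tau_{l+1}\zr_{l+1}(b,a)(0,1-\tau)$. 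On the other hand, the same defining recursion expands the desired right-hand side as $\zr_{l+1}(a,b)(0,\tau) + \tau_{l+1}\zr_{l+1}(a,b)(1,1-\tau)$. Comparison reduces the claim to the identity
\begin{align*}
\zr_{l+1}(a,b)(1,\rho) = \zr_{l+1}(b,a)(0,\rho) \qquad \text{for all } \rho \in \bG_l, \, a,b \in \bR.
\end{align*}

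This auxiliary ``parameter-swap / first-coordinate-flip'' symmetry I would prove as a separate lemma, also by induction on $l$. The base case $l=0$ is $\zr_1(a,b)(1) = b = \zr_1(b,a)(0)$. For the step, expand $\zr_{l+2}(a,b)(1,\rho,\rho_{l+1})$ via the last-coordinate recursion as $\zr_{l+1}(a,b)(1,\rho) + \rho_{l+1}\zr_{l+1}(a,b)(0,1-\rho)$, apply the inductive hypothesis to both terms (using it once with arguments $(a,b)$ and once, after swapping, with arguments $(b,a)$), and recognize the resulting expression as the defining recursion for $\zr_{l+2}(b,a)(0,\rho,\rho_{l+1})$. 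This closes the induction.

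The only real subtlety is bookkeeping: the defining recursion operates on the last coordinate, while the claim in Lemma~\ref{lem:rek} separates a prefix of length $k$, so one must carefully track how complementation $\sigma \mapsto 1-\sigma$ interacts with the concatenation $(\sigma,\tau) \mapsto (1-\sigma,1-\tau)$. The linearity statement of Lemma~\ref{lem:mul} is not strictly needed here, but it serves as a sanity check, since both sides of the asserted identity are manifestly $\bR$-linear in $(s_0,s_1)$, so it would even suffice to verify the cases $(s_0,s_1)\in\{(1,0),(0,1)\}$. The conceptual hurdle is recognizing that the symmetry lemma is the right bridge between the two forms of the recursion; once it is in place, the inductive step of Lemma~\ref{lem:rek} is a one-line matching.
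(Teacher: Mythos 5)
Your proof is correct. The paper itself gives no argument for this lemma---it is imported verbatim from \cite{Knauf93}, Lemma 4.6---so there is nothing internal to compare against; your double induction on $l$ is the natural self-contained argument. I checked the key reduction: after applying the inductive hypothesis to both summands of the last-coordinate recursion (noting $1-(\sigma,\tau)=(1-\sigma,1-\tau)$, which swaps the roles of $a=\zr_k(s_0,s_1)(\sigma)$ and $b=\zr_k(s_0,s_1)(1-\sigma)$), the claim does reduce exactly to your auxiliary identity $\zr_{l+1}(a,b)(1,\rho)=\zr_{l+1}(b,a)(0,\rho)$, and your induction for that identity closes correctly because the hypothesis is quantified over both orderings of the parameter pair, so it may be invoked once for $(a,b)$ and once for $(b,a)$. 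The linearity remark is a valid but unnecessary shortcut, as you say.
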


Using Lemma \ref{lem:mul} we get 
\begin{align*}
 \F_k(\sigma)=\frac{\zr_k(\sigma)}{\h_k(\sigma)} = \frac{\zr_{k+1}(0,1)(0,\sigma)}{\zr_{k+1}(1,1)(0,\sigma)} = 
\frac{1}{2}- \frac{1}{2}\,\frac{\zr_{k+1}(1,-1)(0,\sigma)}{\zr_{k+1}(1,1)(0,\sigma)} 
\end{align*}
for $\sigma \in \bG_k$, $k \in \bN$.  Therefore the interaction coefficients of $\F_k$ have the form 
\begin{align*}
 j_k(\tau) & = -2^{-k} \sum_{\sigma \in \bG_k}{\F_k(\sigma)(-1)^{\sigma \cdot \tau}} \\
& =  - 2^{-k-1} \sum_{\sigma \in \bG_k}{(-1)^{\sigma \cdot \tau}}  + 
2^{-k-1} \sum_{\sigma \in \bG_k}{\frac{\zr_{k+1}(1,-1)(0,\sigma)}{\zr_{k+1}(1,1)(0,\sigma)}(-1)^{\sigma \cdot \tau}} \\
& = -\frac{1}{2}\delta_{\tau,0} + \frac{1}{2} (\mF_k W_k)(\tau), 
\end{align*}
for all $\tau \in \bG_k$, in which $W_k:= \frac{\zr_{k+1}(1,-1)(0,\cdot)}{\zr_{k+1}(1,1)(0,\cdot)}$. 
We are going to prove that $W_k$  is a strictly ferromagnetic observable. This is a sufficient condition for the weak ferromagnetism of $-\F_k$ that was claimed in Theorem \ref{theo:ferro}.

\begin{lemma}\label{lem:ferrosym}
 For $k\in \bN$ we have $W_k \in \mC_k$. 
\end{lemma}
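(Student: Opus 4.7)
The plan is to induct on $k$ and verify $W_k \in \mC_k$ using Proposition \ref{pro:perma}. The base case $k=1$ is immediate: $W_1$ takes the values $1$ and $0$, and both of its Fourier coefficients equal $\tfrac12$.

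For the inductive step, I split each $\sigma \in \bG_k$ as $(\sigma_1, \sigma')$ with $\sigma_1 \in \bG_1$ and $\sigma' \in \bG_{k-1}$. Applying Lemma \ref{lem:rek} (with the $2$-versus-$(k-1)$ split) to both $\zr_{k+1}(1,-1)(0,\sigma_1,\sigma')$ and $\zr_{k+1}(1,1)(0,\sigma_1,\sigma')$ and using the linearity of Lemma \ref{lem:mul}, I express the numerator and denominator of $W_k(\sigma_1,\sigma')$ in terms of $\zr_{k-1}(\sigma')$ and $\h_{k-1}(\sigma')$. Rewriting via $W_{k-1}(\sigma') = 1 - 2\F_{k-1}(\sigma')$ gives the closed forms
\begin{align*}
W_k(0, \sigma') = \frac{1 + W_{k-1}(\sigma')}{3 - W_{k-1}(\sigma')}, \qquad W_k(1, \sigma') = \frac{W_{k-1}(\sigma') - 1}{3 + W_{k-1}(\sigma')}.
\end{align*}

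A short algebraic calculation then yields the identities
\begin{align*}
W_k(0, \sigma') + W_k(1, \sigma') = \frac{8\, W_{k-1}(\sigma')}{9 - W_{k-1}(\sigma')^2}, \qquad W_k(0, \sigma') - W_k(1, \sigma') = \frac{2\,(3 + W_{k-1}(\sigma')^2)}{9 - W_{k-1}(\sigma')^2}.
\end{align*}
Both right-hand sides are analytic for $|W| < 3$, a range containing the image $(-1,1]$ of $W_{k-1}$, and their Taylor expansions at $0$ have manifestly non-negative coefficients since $1/(9-W^2) = \sum_{n\geq 0} W^{2n}/9^{n+1}$. By the inductive hypothesis $W_{k-1} \in \mC_{k-1}$, so Proposition \ref{pro:perma} places both $W_k(0,\cdot) + W_k(1,\cdot)$ and $W_k(0,\cdot) - W_k(1,\cdot)$ in $\mC_{k-1}$.

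It remains to exploit the product structure $\bG_k \cong \bG_1 \times \bG_{k-1}$. A direct calculation gives
\begin{align*}
(\mF_k W_k)(\tau_1, \tau') = \tfrac{1}{2}\, \bigl(\mF_{k-1}[W_k(0,\cdot) + (-1)^{\tau_1} W_k(1,\cdot)]\bigr)(\tau'),
\end{align*}
whose right-hand side is non-negative for each $\tau_1 \in \bG_1$ by the previous paragraph, so $W_k \in \mC_k$. The main technical obstacle is the algebraic reduction showing that the sum and difference both collapse to rational functions with $9 - W^2$ in the denominator --- this is the step that unlocks Proposition \ref{pro:perma}, after which the induction closes.
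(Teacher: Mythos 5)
Your proof is correct and follows essentially the same route as the paper: the same splitting of the first spin via Lemma \ref{lem:rek} and Lemma \ref{lem:mul}, the same M\"obius transformations $g_1(x)=\frac{1+x}{3-x}$, $g_2(x)=\frac{x-1}{x+3}$ with the sum/difference trick, and the same appeal to Proposition \ref{pro:perma}. The one (welcome) simplification is that you verify the non-negativity of the Taylor coefficients of $\frac{8x}{9-x^2}$ and $\frac{2x^2+6}{9-x^2}$ by directly expanding the geometric series $\frac{1}{9-x^2}=\sum_{n\geq 0}\frac{x^{2n}}{9^{n+1}}$, whereas the paper establishes this through an inductive computation of the derivatives of $g_\pm$.
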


\begin{proof}
For $k=1$ we get
\begin{align*}
\left( \mF_1 W_1 \right)(\tau)  = \mF_1 \left(\frac{\zr_{2}(1,-1)(0,\cdot)}{\zr_{2}(1,1)(0,\cdot)} \right)(\tau)= \frac{1}{2}\left[1+(-1)^{\tau} \cdot 0 \right]= 
\frac{1}{2} \geq 0 
\end{align*}
for all $\tau\in \bG_1$. \\

For $i=1,2$ we study the M\"obius transformations $g_i\colon\,]-3,3[ \, \rightarrow \bR$ defined by 
\begin{align*}
g_1(x):=\frac{x+1}{-x+3} \text{~~~and~~~} g_2(x):=\frac{x-1}{x+3}. 
\end{align*}
Using \ref{lem:mul} we formally get
\begin{align*}
\frac{\zr_k(1,0)}{\zr_k(1,2)} = g_1\left(\frac{\zr_k(1,-1)}{\zr_k(1,1)}  \right) \text{~~~and~~~} 
\frac{\zr_k(0,-1)}{\zr_k(2,1)} = g_2\left(\frac{\zr_k(1,-1)}{\zr_k(1,1)} \right).  
\end{align*}
The latter compositions are well-defined due to Lemma \ref{lem:mul}. 
Furthermore the maps 
\begin{align*}
g_\pm\colon\,]-3,3[\,\rightarrow \bR, ~~~ g_\pm:=g_1 \pm g_2,
\end{align*}
are  real-analytic for all $|x|<3$ therefore their expansions are absolutely convergent on the interval $]-3,3[$. Additionally  
\begin{align*}
 \frac{d^n}{dx^n}g_\pm(x)|_{x=0} \geq 0 
\end{align*}
for all $n\in \bN_0$ because we have the following lemma:  
\begin{lemma}
For
\begin{align*}
\bR_{\geq 0}[x]:=\left\{\sum_{i \in \bN_0}{a_i x^i}: a_i \in \bR_{\geq 0} \text{~and~} a_i=0 \text{~for almost all~} i\in \bN_0  \right\}
\end{align*}
and $n \in \bN_0$ we have 
\begin{align*}
\frac{d^n}{dx^n}g_{\pm}(x)= (-1)^{n+1}\frac{p_\pm(x)}{(x^2-9)^{n+1}}          
\end{align*}
for all $x \in \,]-3,3[$ and a polynomial $p_\pm\in \bR_{\geq 0}[x]$ with $\operatorname{deg}(p_\pm)\leq n+2$.
\end{lemma}

\begin{proof}
For $n=0$ we have
\begin{align*}
 g_{+}(x)=-\frac{8x}{x^2-9}\text{~~~~~and~~~~~} g_{-}(x)=-\frac{2x^2+6}{x^2-9}.  
\end{align*}
By induction hypothesis we get $\frac{d^n}{dx^n}g_{\pm}(x)= (-1)^{n+1}\frac{p_\pm(x)}{(x^2-9)^{n+1}}$ with $p_\pm(x):=\sum_{i=0}^{N_\pm}{a_i^{\pm} x^i} \in \bR_{\geq0}[x]$ and $N_{\pm}:=\operatorname{deg}(p_\pm)\leq n+2 $. Therefore we have
\begin{align*}
 \frac{d^{n+1}}{dx^{n+1}}g_{\pm}(x) & = \frac{d}{dx}(-1)^{n+1}\frac{p_\pm(x)}{(x^2-9)^{n+1}} \\
				    & = (-1)^{n+2}\frac{9p_\pm'(x) - x^2 p_\pm'(x)+2(n+1)xp_\pm(x)}{(x^2-9)^{n+2}} \\
				    & = (-1)^{n+2}\frac{q_\pm(x)}{(x^2-9)^{n+2}},  
\end{align*}
in which $q_\pm(x):= 9p_\pm'(x) - x^2 p_\pm'(x)+2(n+1)xp_\pm(x)$.
Apparently  $\operatorname{deg}(q_\pm)\leq \operatorname{deg}(p_\pm)+1 \leq n+3$. A short calculation shows that 
\begin{align*}
q_\pm(x)  = & ~9 a_1^{\pm}+ \sum_{i=1}^{N_\pm-1}{ \left[ 9(i+1)a_{i+1}^{\pm}+(2(n+1)-(i-1))a_{i-1}^{\pm}\right] x^i} \\
& + \sum_{i=N_\pm}^{N_\pm+1}{ \left[2(n+1)-(i-1)\right]a_{i-1}^{\pm} x^i}.
\end{align*}
Due to $N_\pm\leq n+2$ for all $i\in \{1, \ldots N_\pm+1 \}$ we get $2(n+1)-(i-1) \geq n > 0$.
Therefore all coefficients of $q_\pm$ are non-negative.
\end{proof}
Since all  preconditions of Proposition \ref{pro:perma} are satisfied, we get by induction hypothesis that    
\begin{align*}
\frac{\zr_{k+1}(1,0)(0,\cdot)}{\zr_{k+1}(1,2)(0,\cdot)} \pm \frac{\zr_{k+1}(0,-1)(0,\cdot)}{\zr_{k+1}(2,1)(0,\cdot)} = \left( g_1 \pm g_2 \right)\left(\frac{\zr_{k+1}(1,-1)(0,\cdot)}{\zr_{k+1}(1,1)(0,\cdot)}  \right) 
\end{align*}
is strictly ferromagnetic. Therefore for $\tau:=(\tau_1, \ldots, \tau_{k+1})\in \bG_{k+1}$ and $\tau':=(\tau_2, \ldots, \tau_{k+1})$ we get by using Lemma \ref{lem:rek}
\begin{align*}
  &  \mF_{k+1} \left(\frac{\zr_{k+2}(1,-1)(0,\cdot)}{\zr_{k+2}(1,1)(0,\cdot)} \right)(\tau)  \\
= &\, 2^{-(k+1)} \sum_{\sigma \in  \bG_{k+1}}{\frac{\zr_{k+2}(1,-1)(0,\sigma)}{\zr_{k+2}(1,1)(0,\sigma)}(-1)^{\sigma \cdot \tau}} \\
= &\, 2^{-(k+1)}  \sum_{\sigma' \in  \bG_{k}}{\left[ \frac{\zr_{k+2}(1,-1)(0,0,\sigma')}{\zr_{k+2}(1,1)(0,0,\sigma')}
 +(-1)^{\tau_1}\frac{\zr_{k+2}(1,-1)(0,1,\sigma')}{\zr_{k+2}(1,1)(0,1,\sigma')} \right] (-1)^{\sigma' \cdot \tau'}} \\
= & \, 2^{-(k+1)}  \sum_{\sigma' \in  \bG_{k}}{\left[ \frac{\zr_{k+1}(\zr_2(1,-1)(0,0),\zr_2(1,-1)(1,1) )(0,\sigma')}
{\zr_{k+1}(\zr_2(1,1)(0,0),\zr_2(1,1)(1,1))(0,\sigma')} \right.} \\ 
 & \, {\left.+(-1)^{\tau_1}\frac{\zr_{k+1}(\zr_2(1,-1)(0,1),\zr_2(1,-1)(1,0))(0,\sigma')}
{\zr_{k+1}(\zr_2(1,1)(0,1),\zr_2(1,1)(1,0) )(0,\sigma')} \right] (-1)^{\sigma' \cdot \tau'}} \\
= & \, 2^{-(k+1)}  \sum_{\sigma' \in  \bG_{k}}{\left[ \frac{\zr_{k+1}(1,0)(0,\sigma')}{\zr_{k+1}(1,2)(0,\sigma')}
 +(-1)^{\tau_1}\frac{\zr_{k+1}(0,-1)(1,\sigma')}{\zr_{k+1}(2,1)(0,\sigma')} \right] (-1)^{\sigma' \cdot \tau'}} \\
= &\, 2^{-(k+1)}  \sum_{\sigma' \in  \bG_{k}}{\left(g_1+(-1)^{\tau_1}g_2\right) \left( \frac{\zr_{k+1}(1,-1)(0,\sigma')} 
{\zr_{k+1}(1,1)(0,\sigma')}\right)(-1)^{\sigma' \cdot \tau'}} \\
\geq &\, 0.  
\end{align*}
This completes the proof of Lemma \ref{lem:ferrosym}.
\end{proof}

\bigskip

\subsection{Proof of Theorem \ref{theo:inter}}

First of all we prove that \eqref{eq:inter} is summable for $\RE(s)>2$. Let $t\in [0,1]$ and $\sigma:=\RE(s)\geq 2+\delta$ for $\delta >0$. Then we have
\begin{align*}
 \sum_{\tau \in \bG_{\infty}}{\left| \frac{\exp\left(2 \pi i t (1-\F(\tau)) \right)}{h(\tau)^s} \right|}
 & =   \sum_{\tau \in \bG_{\infty}}{ h(\tau)^{-\sigma} }  = \sum_{n=1}^{\infty}{\varphi(n)n^{-\sigma}} \\
 & \leq \sum_{n=1}^{\infty}{n^{-1-\delta}} \leq  \sum_{n=1}^{\infty}{n^{-1-\delta}} \leq 1 +\frac{1}{\delta}. 
\end{align*}
Equation \eqref{eq:zustand} follows from \cite[Coro.\,2.3]{Knauf93}. Now we have to prove equation \eqref{eq:key}. The M\"obius function is denote by $\mu$. Using the well known facts that $1/\zeta(s)=\sum_{n=1}^\infty\mu(n)n^{-s}$ and $\mu(n)=\sum_{1\leq k\leq n, \gcd(k,n)=1}\exp(2\pi i k/n)$ (see e.\,g. \cite{Hardy60}) we get for $\sigma:=\RE(s)\geq 2+\delta$, $\delta>0$ 
 \begin{align*}
   &  \left| \sum_{n=1}^{\infty}{\mu(n)n^{-s}} - \sum_{\tau \in \bG_k}{\exp(-2 \pi i \F_k(\tau))(\h_k(\tau))^{-s}} \right| \\
 \leq & \sum_{n=k+2}^{\infty}{\left(\varphi(n)+\varphi_k(n) \right) n^{-\sigma}} \\
 \leq & 2\sum_{n=k+2}^{\infty}{ n^{-1-\delta}} 
\end{align*}
for all $k\in \bN$ using $\varphi_k\leq \varphi$ and $\varphi_k(p)=\varphi(p)$ for $1\leq p\leq k+1$. Taking the limit $k\to \infty$ the proof of Theorem \ref{theo:inter} is complete.

\bigskip
\bigskip

\paragraph{Acknowledgement}
I am very grateful to Andreas Knauf (Erlangen) for supporting this work.

\newpage

\end{document}